\newtheorem{theorem}{Theorem}[section]
\newtheorem{definition}[theorem]{Definition}
\newtheorem{lemma}[theorem]{Lemma}
\numberwithin{equation}{section}
\subjclass[2010]{35B45, 35B65, 35Q35}
\keywords{Navier--Stokes/Poisson--Nernst--Planck equations; regularity criteria; Besov spaces.}
\begin{document}
\title[Regularity Criteria for the Navier--Stokes/Poisson--Nernst--Planck System]{Logarithmical  regularity criteria in terms of pressure for the three dimensional nonlinear dissipative system modeling
electro-diffusion}

\author[Jihong Zhao]{Jihong Zhao}

\address{College of Science, Northwest A\&F
University,  Yangling,  Shaanxi  712100,  China}

\email{jihzhao@163.com.}

\vskip .2in
\begin{abstract}
In this paper, logarithmically improved regularity criteria for the  Navier--Stokes/Poisson--Nernst--Planck system are established in terms of both the pressure and the gradient of pressure in the homogeneous Besov space.
\end{abstract}

\maketitle

\section{Introduction}

In this paper, we study the following Navier--Stokes/Poisson--Nernst--Planck system modeling electro-diffusion, which is governed  by transport, and Lorentz force coupling between the Navier--Stokes equations of an incompressible fluid and the transported Poisson--Nernst--Planck equations of a binary diffuse charge densities:
\begin{equation}\label{eq1.1}
\begin{cases}
  \partial_{t} u+(u\cdot\nabla) u-\Delta
  u+\nabla \Pi=\Delta
  \Psi\nabla\Psi,\ \ & x\in\mathbb{R}^{3},\ t>0,\\
  \nabla\cdot u=0,\ \ & x\in\mathbb{R}^{3},\ t>0,\\
  \partial_{t} v+(u\cdot \nabla)
  v=\nabla\cdot(\nabla v-v\nabla \Psi),\ \ & x\in\mathbb{R}^{3},\ t>0,\\
  \partial_{t} w+(u\cdot \nabla)
  w=\nabla\cdot(\nabla w+w\nabla \Psi),\ \ & x\in\mathbb{R}^{3},\ t>0,\\
  \Delta \Psi=v-w,\ \ & x\in\mathbb{R}^{3},\ t>0,\\
   (u, v, w)|_{t=0}=(u_0, v_0, w_0), \ \ & x\in\mathbb{R}^{3},
\end{cases}
\end{equation}
where  $u=(u_{1},u_{2}, u_{3})$ is the velocity field,   $\Pi$ is the
pressure,  $\Psi$
is the electric potential,  $v$ and $w$ are the densities of binary diffuse negative and positive charges (e.g., ions),
respectively.    The right-hand side term in the momentum equations is the Lorentz force\footnote{Assume that the average ion velocities is small (compared to the speed of light), so that the contribution to the Lorentz force from the magnetic field is negligible.}, which exhibits
$\Delta\Psi\nabla\Psi=\nabla\cdot\sigma$,
where the electric stress $\sigma$ is a rank one tensor plus a
pressure, for $i,j=1,2,3$,
\begin{equation}\label{eq1.2}
  [\sigma]_{ij}=\big(\nabla\Psi\otimes\nabla\Psi-\frac{1}{2}|\nabla\Psi|^{2}I\big)_{ij}
  =\partial_{x_{i}}\Psi\partial_{x_{j}}\Psi-\frac{1}{2}|\nabla\Psi|^{2}\delta_{ij}.
\end{equation}
%--(eq1.3)--
Here $\otimes$ denotes the tensor product, $I$ is $3\times3$ identity matrix and  $\delta_{ij}$ is the
Kronecker symbol. The
electric stress $\sigma$ stems from the balance of kinetic energy
with electrostatic energy via the least action principle (cf.
\cite{RLZ07}). For simplicity, we have assumed that the fluid density, viscosity, charge mobility and dielectric constant are unity.
\vskip .1in

System \eqref{eq1.1} was first proposed by Rubinstein \cite{R90}, which is capable of describing electro-chemical and
fluid-mechanical transport throughout the cellular environment,  we
refer the readers to see \cite{BW04}, \cite{J11}, \cite{RLZ07}, \cite{S09}  and the references therein for more details of physical background and applied aspects. Based on Kato's semigroup framework, the local smooth theory of the system \eqref{eq1.1} has been established by Jerome \cite{J02}. In more general situation, the global existence of strong solutions with  small initial data and the local existence of strong solutions with any initial data  in various scaling invariant spaces have been studied by  Zhang and Yin  \cite{ZY15},  Zhao, Deng and Cui \cite{ZDC10}, Zhao and Liu \cite{ZL15}, and  Zhao, Zhang and Liu \cite{ZZL15}. The Prodi--Serrin type blow up criterion and the Beale--Kato--Majda type blow-up criterion for local smooth solutions have been established by Zhao and Bai \cite{ZB16}.
\vskip .1in

Notice that, the first two equations of system \eqref{eq1.1} are the linear momentum equations of incompressible flow, which reduces to the following  Navier--Stokes equations if the flow is charge-free (i.e., $v=w=\Psi=0$):
\begin{equation}\label{eq1.3}
\begin{cases}
  \partial_{t} u+(u\cdot\nabla)u-\Delta
  u+\nabla \Pi=0,\ \ & x\in\mathbb{R}^{3},\ t>0,\\
  \nabla\cdot u=0,\ \ & x\in\mathbb{R}^{3},\ t>0,\\
  u|_{t=0}=u_0,\ \ & x\in\mathbb{R}^{3}.\\
\end{cases}
\end{equation}
In \cite{L34}, Leray constructed a global weak solution $u\in L^{\infty}(0, \infty; L^{2}(\mathbb{R}^{3}))\cap  L^{2}(0, \infty; H^{1}(\mathbb{R}^{3}))$ for arbitrary initial data $u_{0}\in L^{2}(\mathbb{R}^{3})$ and questioned whether the weak solution is regular or smooth for a given smooth and compactly supported initial data $u_{0}$. This problem still remains generally unsolved.  In the celebrated work \cite{S62}, Serrin  introduced the class $L^{q}(0,T;L^{p}(\mathbb{R}^{3}))$ and showed that any weak solution $u$ satisfies
\begin{align}\label{eq1.4}
\int_{0}^{T}\|u(\cdot,t)\|_{L^{p}}^{q}dt<\infty\ \ \ \text{with}\ \ \ \frac{2}{q}+\frac{3}{p}=1\ \ \ \text{and} \ \ \  3<p\leq \infty,
\end{align}
then $u$ is regular on $(0,T]$, while the limit case $p=3$ was covered by Escauriaza et al. \cite{ESS03}. Later on,   Beir\~{a}o da
Veiga \cite{B95} showed that if the gradient of the velocity $\nabla u$ satisfies
\begin{align}\label{eq1.5}
  \int_{0}^{T}\|\nabla u(\cdot,t)\|_{L^{p}}^{q}dt<\infty\ \text{ with
  }\ \frac{2}{q}+\frac{3}{p}= 2\ \ \text{ and }\ \ \frac{3}{2}<p\leq \infty,
\end{align}
then  $u$ is still  regular on $(0,T]$. Since then,  many interesting sufficient conditions were proposed to guarantee the regularity of weak solutions, see  \cite{FJNZ10},  \cite{KOT02}, \cite{KT00} , \cite{M05}, \cite{ZL08} and the references cited there.
\vskip.1in

On the other hand, based on the well-known pressure-velocity relation of the Navier--Stokes equations \eqref{eq1.3} in $\mathbb{R}^{3}$, given by
\begin{equation}\label{eq1.6}
   \Pi=\sum_{i,j=1}^{3}\mathcal{R}_{i}\mathcal{R}_{j}(u_{i}u_{j}),
\end{equation}
 where $\mathcal{R}_{i}$ ($i=1,2,3$) are the Riesz transforms in $\mathbb{R}^{3}$, certain growth conditions in terms of pressure were proposed to ensure the regularity of weak solutions. Chae and Lee \cite{CL01} showed that if  the pressure $\Pi$ satisfies
 \begin{align}\label{eq1.7}
\int_{0}^{T}\|\Pi(\cdot,t)\|_{L^{p}}^{q}dt<\infty\ \ \ \text{with}\ \ \ \frac{2}{q}+\frac{3}{p}<2\ \ \ \text{and} \ \ \  \frac{3}{2}<p\leq \infty,
\end{align}
then the weak solution $u$ is actually smooth. A similar condition in terms of the gradient of  pressure
  \begin{align}\label{eq1.8}
\int_{0}^{T}\|\nabla\Pi(\cdot,t)\|_{L^{p}}^{q}dt<\infty\ \ \ \text{with}\ \ \ \frac{2}{q}+\frac{3}{p}=3\ \ \ \text{and} \ \ \  1<p\leq \infty,
\end{align}
also implies the regularity of weak solution $u$ as shown by Zhou \cite{Z06} and Struwe \cite{S07}.
 These two results were subsequently refined by many authors, we refer to \cite{BG02}, \cite{CZ07}, \cite{FJN08}, \cite{FO08} and \cite{Z05} for more details. Recently,  Fan et al.  \cite{FJNZ10} proved that if the pressure satisfies one of the following conditions:
\begin{align}\label{eq1.9}
\int_{0}^{T}\frac{\|\Pi(\cdot,t)\|_{\dot{B}^{0}_{\infty,\infty}}}{\sqrt{1+\ln(e+\|\Pi(\cdot,t)\|_{\dot{B}^{0}_{\infty,\infty}})}}dt<\infty,
\end{align}
or
\begin{align}\label{eq1.10}
\int_{0}^{T}\frac{\|\nabla \Pi(\cdot,t)\|_{\dot{B}^{0}_{\infty,\infty}}^{\frac{2}{3}}}{\big(1+\ln(e+\|\nabla \Pi(\cdot,t)\|_{\dot{B}^{0}_{\infty,\infty}})\big)^{\frac{2}{3}}}dt<\infty,
\end{align}
then the solution $u$ can be smoothly extended beyond time $T$. Guo and Gala \cite{GG11}  refined the regularity criterion \eqref{eq1.9} to prove that if the pressure satisfies
\begin{align}\label{eq1.11}
\int_{0}^{T}\frac{\|\Pi(\cdot,t)\|_{\dot{B}^{-1}_{\infty,\infty}}^{2}}{1+\ln(e+\|\Pi(\cdot,t)\|_{\dot{B}^{-1}_{\infty,\infty}})}dt<\infty,
\end{align}
then the solution $u$ is smooth up to time $T$.  Due to the scaling invariant property of the Navier--Stokes equations satisfied, these three regularity criteria can be regarded as an eventual forms in terms of pressure and the gradient of pressure.
\vskip.1in

 For the system \eqref{eq1.1},  the global existence of weak solutions of the initial/boundary-value problem has been established by Jerome and Sacco \cite{JS09},  Ryham \cite{R09}  and Schmuck \cite{S09}. Since the Navier--Stokes equations is a subsystem of the system \eqref{eq1.1}, the smoothness of weak solutions of \eqref{eq1.1} is  also unknown.  Some logarithmical improved regularity criteria in terms of the velocity to some more generalized system than \eqref{eq1.1} have been concerned by Fan et al. \cite{FG09}, \cite{FLN13} and \cite{FNZ12}. Motivated by  the regularity criteria \eqref{eq1.9}--\eqref{eq1.11} for the Navier--Stokes equations,  in this paper, we aim at extending these regularity criteria in terms of pressure and the gradient of pressure to  the system \eqref{eq1.1}.
 \vskip.1in

The main results  in this paper now read:

\begin{theorem}\label{th1.1}
Let $(u,v,w)$ be a local smooth solution to the system \eqref{eq1.1} with initial data $u_{0}\in H^{3}(\mathbb{R}^{3})$ and $\nabla\cdot u_{0}=0$, $(v_{0}, w_{0})\in L^{1}(\mathbb{R}^{3})\cap H^{2}(\mathbb{R}^{3})$ and $v_{0}, w_{0}\geq0$. Suppose that the pressure $\Pi$ satisfies one of the following conditions:
\begin{align}\label{eq1.12}
\int_{0}^{T}\frac{\|\Pi(\cdot,t)\|_{\dot{B}^{0}_{\infty,\infty}}}{\sqrt{1+\ln(e+\|\Pi(\cdot,t)\|_{\dot{B}^{0}_{\infty,\infty}})}}dt<\infty,
\end{align}
or
\begin{align}\label{eq1.13}
\int_{0}^{T}\frac{\|\Pi(\cdot,t)\|_{\dot{B}^{-1}_{\infty,\infty}}^{2}}{1+\ln(e+\|\Pi(\cdot,t)\|_{\dot{B}^{-1}_{\infty,\infty}})}dt<\infty.
\end{align}
Then the solution $(u,v,w)$ is smooth up to time $T$.
\end{theorem}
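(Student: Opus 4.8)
The plan is a continuation argument: assuming the local smooth solution is defined on a maximal interval $[0,T^{*})$ with $T^{*}\le T$, it suffices to bound
$X(t):=e+\|u(t)\|_{H^{3}}^{2}+\|v(t)\|_{H^{2}}^{2}+\|w(t)\|_{H^{2}}^{2}$ on $[0,T]$ under \eqref{eq1.12} or \eqref{eq1.13}, for then the local existence theory of Jerome \cite{J02} together with the blow-up criteria of Zhao--Bai \cite{ZB16} forces $T^{*}>T$. The preliminary ingredients are the standard a priori estimates valid for any smooth solution: the mass identities $\|v(t)\|_{L^{1}}=\|v_{0}\|_{L^{1}}$, $\|w(t)\|_{L^{1}}=\|w_{0}\|_{L^{1}}$ and the positivity $v,w\ge0$ (parabolic maximum principle); the free-energy estimate, which yields $u\in L^{\infty}(0,T;L^{2})\cap L^{2}(0,T;\dot H^{1})$, $\nabla\Psi\in L^{\infty}(0,T;L^{2})$ and $\nabla\sqrt{v},\nabla\sqrt{w}\in L^{2}(0,T;L^{2})$; and, interpolating $v,w\in L^{\infty}(0,T;L^{1})$ with the consequence $\|v\|_{L^{3}}=\|\sqrt{v}\|_{L^{6}}^{2}\lesssim\|\nabla\sqrt{v}\|_{L^{2}}^{2}$ of Sobolev's inequality, the space-time bound $v,w\in L^{5/3}(\mathbb{R}^{3}\times(0,T))$. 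These are the sub-critical quantities that will be $L^{1}$ in time in the Gronwall step (after a routine bootstrap, once a better velocity bound is available, the integrability of the charge densities can be improved as needed).

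The core is an $L^{3}$ energy estimate for $u$: testing the momentum equation with $|u|u$ and using $\nabla\cdot u=0$ to annihilate the convective term gives
\[
\frac{1}{3}\frac{d}{dt}\|u\|_{L^{3}}^{3}+c\big\||u|^{1/2}\nabla u\big\|_{L^{2}}^{2}\le\Big|\int_{\mathbb{R}^{3}}\Pi\,u\cdot\nabla|u|\,dx\Big|+\Big|\int_{\mathbb{R}^{3}}(v-w)\,\nabla\Psi\cdot u\,|u|\,dx\Big|,
\]
where the dissipation controls $\int_{0}^{T}\|u\|_{L^{9}}^{3}\,dt$ by Sobolev's inequality (a Serrin class). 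The Lorentz integral I would bound by Hölder together with the elliptic estimate $\|\nabla\Psi\|_{L^{r}}\lesssim\|v-w\|_{L^{r_{*}}}$ with $\tfrac1{r_{*}}=\tfrac1r+\tfrac13$, Gagliardo--Nirenberg, and the a priori $L^{5/3}$ bound on $v,w$, absorbing it after Young's inequality into the dissipation plus an $L^{1}(0,T)$ term. For the pressure integral one inserts the representation $\Pi=\sum_{i,j}\mathcal{R}_{i}\mathcal{R}_{j}\big(u_{i}u_{j}-[\sigma]_{ij}\big)$ obtained by taking the divergence of the momentum equation; splitting $\Pi$ by Littlewood--Paley into low and high frequencies at a level $N$, the low-frequency part contributes $\lesssim N\,\|\Pi\|_{\dot B^{0}_{\infty,\infty}}$ in $L^{\infty}$, while the high-frequency part decays like $2^{-N\varepsilon}$ times a higher Sobolev norm of $\Pi$, the latter dominated polynomially by $X(t)$ through the representation. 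Choosing $N\sim\ln\big(e+X(t)\big)$ converts this term into a contribution bounded, up to absorbable remainders and the $L^{1}$ charge terms, by $\|\Pi(t)\|_{\dot B^{0}_{\infty,\infty}}\sqrt{1+\ln(e+\|\Pi(t)\|_{\dot B^{0}_{\infty,\infty}})}\,\ln(e+X(t))\,X(t)$. For \eqref{eq1.13} the same scheme applies after writing $\Pi=\nabla\cdot\vec{G}$ with $\|\vec{G}\|_{\dot B^{0}_{\infty,\infty}}\approx\|\Pi\|_{\dot B^{-1}_{\infty,\infty}}$ and transferring one derivative onto the velocity factor.

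Running these estimates in parallel with the top-order ones --- an $H^{3}$ energy estimate for $u$ (in which the pressure disappears since $u$ is divergence free, the critical commutator being controlled by $\|\nabla u\|_{L^{\infty}}$, itself bounded via the logarithmic Sobolev inequality by $\|\nabla u\|_{\dot B^{0}_{\infty,\infty}}\ln(e+X)$) and $H^{2}$ estimates for $v$ and $w$ --- one is led to a differential inequality
\[
\frac{d}{dt}X(t)\le C\big(g(t)+k(t)\big)\,X(t)\,\ln\!\big(e+X(t)\big),
\]
where $k\in L^{1}(0,T)$ collects the sub-critical contributions and $g(t)$ is exactly the integrand in \eqref{eq1.12} (respectively \eqref{eq1.13}), the admissible power of the logarithm being precisely the one for which that hypothesis guarantees $g\in L^{1}(0,T)$. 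Then $\frac{d}{dt}\ln\!\big(\ln X(t)\big)\le C(g+k)$ yields $\sup_{[0,T]}X<\infty$, and the solution extends smoothly past $T$. I expect the pressure term to be the hardest point: one must distribute the Littlewood--Paley truncation and the derivatives so as to extract precisely one logarithm of $X$ (and no more), so that the logarithmic gain built into \eqref{eq1.12}--\eqref{eq1.13} is exactly enough to absorb it; and, unlike in the pure Navier--Stokes case \cite{FJNZ10,GG11}, one must keep the charge densities $v,w$ --- whose regularity is only slaved to that of $u$ through the Poisson--Nernst--Planck block and the Lorentz force --- under control within the very same estimate.
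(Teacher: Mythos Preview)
Your overall continuation framework is correct and matches the paper's, but the closure of the top-order estimate as you describe it does not work. You propose to handle the $H^{3}$ commutator via
\[
\|\nabla u\|_{L^{\infty}}\lesssim 1+\|\nabla u\|_{\dot B^{0}_{\infty,\infty}}\ln(e+X),
\]
and then claim this yields $\frac{d}{dt}X\le C(g+k)\,X\ln(e+X)$. But nothing in hypotheses \eqref{eq1.12}--\eqref{eq1.13} controls $\|\nabla u\|_{\dot B^{0}_{\infty,\infty}}$; that quantity is \emph{not} bounded by $g(t)+k(t)$, so the differential inequality you write down is simply not derivable from your estimates. The double-log Gronwall argument therefore has no input to feed on at the top order, and the scheme collapses precisely at the point you flag as ``the hardest''.

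The paper closes the loop quite differently. First, the intermediate velocity estimate is done in $L^{4}$ (testing with $|u|^{2}u$), and the pressure integral is handled not by a bare Littlewood--Paley split but by the Kozono--Taniuchi bilinear bound $\|\Pi\|_{L^{4}}^{2}\le C\|\Pi\|_{L^{2}}\|\Pi\|_{BMO}$ together with Lemma~\ref{le2.6} and the logarithmic inequality of Lemma~\ref{le2.3} (for \eqref{eq1.13} the Meyer--Gerard--Oru inequality \eqref{eq2.6} replaces the $BMO$ step). Second, and crucially, the $H^{3}$ commutator is estimated by Kato--Ponce with $\nabla u$ placed in $L^{2}$, not $L^{\infty}$:
\[
K_{1}\le C\|\nabla u\|_{L^{2}}\|\nabla\Delta u\|_{L^{4}}^{2}\le \tfrac14\|\Delta^{2}u\|_{L^{2}}^{2}+C\|\nabla u\|_{L^{2}}^{14}.
\]
This forces an intermediate $H^{1}$ step, which in turn is fed by $\|u\|_{L^{4}}^{16}$. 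Third, instead of a double-log Gronwall, the paper uses a \emph{tail-smallness} trick: since the integrand in \eqref{eq1.12} (or \eqref{eq1.13}) is $L^{1}(0,T)$, one chooses $T_{0}<T$ so that its integral over $[T_{0},T)$ is $<\sigma$; Gronwall on $[T_{0},t]$ then yields $\|u(t)\|_{L^{4}}^{4}\le C_{1}(e+Y(t))^{2C\sigma}$, and this propagates through the hierarchy $L^{4}\to H^{1}\to H^{3}$ as $(e+Y)^{72C\sigma}$. Choosing $\sigma$ so that $72C\sigma\le 1$ makes the right-hand side linear in $e+Y$, after which an ordinary Gronwall closes. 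Your proposal misses both the $L^{2}$-placement of $\nabla u$ in the commutator and this small-exponent mechanism; without them the argument does not terminate.
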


\noindent\textbf{Remark 1.1.}  If  $T_{*}<\infty$ is the maximal existence time of  the solution $(u,v,w)$, then by Theorem \ref{th1.1}, we have
\begin{align*}
\int_{0}^{T_{*}}\frac{\|\Pi(\cdot,t)\|_{\dot{B}^{0}_{\infty,\infty}}}{\sqrt{1+\ln(e+\|\Pi(\cdot,t)\|_{\dot{B}^{0}_{\infty,\infty}})}}dt=\infty
\end{align*}
and
\begin{align*}
\int_{0}^{T_{*}}\frac{\|\Pi(\cdot,t)\|_{\dot{B}^{-1}_{\infty,\infty}}^{2}}{1+\ln(e+\|\Pi(\cdot,t)\|_{\dot{B}^{-1}_{\infty,\infty}})}dt=\infty.
\end{align*}
\vskip0.1in

\noindent\textbf{Remark 1.2.} Since the Sobolev embedding $L^{3}(\mathbb{R}^{3})\rightarrow \dot{B}^{-1}_{\infty,\infty}(\mathbb{R}^{3}) $, the condition \eqref{eq1.13} in Theorem \ref{th1.1} can be superseded by the following  condition
\begin{align*}
\int_{0}^{T}\frac{\|\Pi(\cdot,t)\|_{L^{3}}^{2}}{1+\ln(e+\|\Pi(\cdot,t)\|_{L^{3}})}dt<\infty,
\end{align*}
which still implies the smoothness of the solution $(u,v,w)$ on $(0,T]$.
\vskip.1in

Similarly, logarithmical regularity criterion in terms of the gradient of pressure can be also established as follows.

\begin{theorem}\label{th1.2}
Let $(u,v,w)$ be a local smooth solution to the system \eqref{eq1.1} with initial data $u_{0}\in H^{3}(\mathbb{R}^{3})$ and $\nabla\cdot u_{0}=0$, $(v_{0}, w_{0})\in L^{1}(\mathbb{R}^{3})\cap H^{2}(\mathbb{R}^{3})$ and $v_{0}, w_{0}\geq0$. Suppose that the gradient of pressure $\nabla\Pi$ satisfies
\begin{align}\label{eq1.14}
\int_{0}^{T}\frac{\|\nabla \Pi(\cdot,t)\|_{\dot{B}^{0}_{\infty,\infty}}^{\frac{2}{3}}}{\big(1+\ln(e+\|\nabla \Pi(\cdot,t)\|_{\dot{B}^{0}_{\infty,\infty}})\big)^{\frac{2}{3}}}dt<\infty.
\end{align}
Then the local smooth solution $(u,v,w)$ is smooth up to time $T$.
\end{theorem}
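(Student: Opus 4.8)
The plan is to establish an a priori bound on a high-order energy $\|u(\cdot,t)\|_{H^{3}}^{2}+\|v(\cdot,t)\|_{H^{2}}^{2}+\|w(\cdot,t)\|_{H^{2}}^{2}$ under the hypothesis \eqref{eq1.14}, so that the local smooth solution cannot blow up at time $T$. As a preliminary step I would record the basic estimates that do not use \eqref{eq1.14}: nonnegativity and conservation of mass for $v,w$ (hence uniform $L^{1}$ bounds), the $L^{2}$ energy inequality for $u$, and the entropy-type estimate controlling $\int (v\ln v + w\ln w)$ and $\int_{0}^{T}\!\int (|\nabla\sqrt v|^{2}+|\nabla\sqrt w|^{2})$. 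Together with elliptic regularity for $\Delta\Psi = v-w$, these give control of $\Psi$ in terms of $v,w$. I would also use the known Prodi--Serrin/BKM-type criteria of Zhao--Bai \cite{ZB16}: it suffices to bound, say, $\int_{0}^{T}\|\nabla u\|_{L^{\infty}}\,dt$ or an equivalent subcritical norm of $u$, which reduces the whole problem to an estimate on the fluid part.

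Next I would translate the pressure-gradient hypothesis into information about $u$. Taking the divergence of the momentum equation gives $-\Delta\Pi = \nabla\cdot((u\cdot\nabla)u) - \nabla\cdot(\Delta\Psi\nabla\Psi) = \sum_{i,j}\partial_i\partial_j(u_iu_j) - \sum_{i,j}\partial_i\partial_j(\partial_i\Psi\,\partial_j\Psi - \tfrac12|\nabla\Psi|^2\delta_{ij})$, so $\Pi = \sum \mathcal{R}_i\mathcal{R}_j(u_iu_j) - \sum\mathcal{R}_i\mathcal{R}_j([\sigma]_{ij})$ and hence $\nabla\Pi$ is a sum of Calder\'on--Zygmund operators applied to $u\otimes u$ and to $\nabla\Psi\otimes\nabla\Psi$; in particular $\|\nabla\Pi\|_{\dot B^{0}_{\infty,\infty}}$ is controlled by $\||u|^2\|_{\dot B^{0}_{\infty,\infty}}+\||\nabla\Psi|^2\|_{\dot B^{0}_{\infty,\infty}}$ up to constants. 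The core computation is the differential inequality for $y(t) := 1 + \|u\|_{H^{3}}^{2}+\|v\|_{H^{2}}^{2}+\|w\|_{H^{2}}^{2}$: apply $\Delta$ (or $\nabla^3$) to the $u$-equation, $\nabla^2$ to the $v,w$-equations, take $L^2$ inner products, integrate by parts, and handle the trilinear fluid terms by the standard device of first multiplying the momentum equation by $u|\nabla\Pi|$ or, more precisely, by using the identity $\int (u\cdot\nabla)u\cdot u\,|u|^{2}$-type manipulations to pull out a factor $\|\nabla\Pi\|_{\dot B^{0}_{\infty,\infty}}$ (this is the mechanism behind the Fan--Jia--Nakamura--Zhou argument \cite{FJNZ10}). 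The outcome I expect is an inequality of the shape
\begin{align*}
\frac{d}{dt}y(t) \le C\,\|\nabla\Pi\|_{\dot B^{0}_{\infty,\infty}}^{\frac{2}{3}}\,y(t)\,\ln\!\big(e+y(t)\big)^{\frac{1}{3}}\cdot\ln(e+y(t))^{\frac{1}{3}} + C\,y(t),
\end{align*}
where the logarithmic factors arise from the logarithmic Sobolev-type inequality $\|f\|_{L^{\infty}}\lesssim \|f\|_{\dot B^{0}_{\infty,\infty}}\ln(e+\|f\|_{H^{s}})+1$ used to control $\|\nabla u\|_{L^{\infty}}$ (and the analogous bound for $\nabla\Psi$, whose higher norms are tied to those of $v,w$, which are in turn absorbed into $y$).

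The final step is to absorb the charge contributions and close the Gronwall argument. The terms coming from $v,w$ and $\Psi$ on the right-hand side — the Lorentz force $\Delta\Psi\nabla\Psi$ in the $u$-equation and the drift terms $\nabla\cdot(v\nabla\Psi)$, $\nabla\cdot(w\nabla\Psi)$ — are estimated by interpolation and the elliptic gain $\|\nabla^2\Psi\|\lesssim\|v-w\|$, so that after Young's inequality all of them are bounded by $\varepsilon(\text{dissipation}) + C y(t)\ln(e+y(t))$ or similar, which is consistent with the target inequality; here the uniform low-order bounds from the preliminary step are what make the interpolation constants finite. Writing $Y(t)=\ln(e+y(t))$, the differential inequality becomes $Y'(t)\lesssim \|\nabla\Pi\|_{\dot B^{0}_{\infty,\infty}}^{2/3} Y(t)^{-1/3}\cdot Y(t) + 1$, i.e. $(Y^{2/3})'\lesssim \|\nabla\Pi\|_{\dot B^{0}_{\infty,\infty}}^{2/3}+1$ after dividing appropriately, which integrates to a finite bound on $(0,T]$ precisely because of hypothesis \eqref{eq1.14}. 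I anticipate the main obstacle to be the bookkeeping in the high-order energy estimate: controlling the fluid trilinear term and the coupling terms simultaneously so that every logarithm that appears is exactly of the power needed to match the $2/3$-exponent in \eqref{eq1.14}, rather than a worse power — this is where the choice of which norm of $u$ to run the energy estimate in, and the precise form of the logarithmic interpolation inequality, must be made carefully. Once that single differential inequality is in hand, Theorem \ref{th1.2} follows by the same Gronwall/continuation argument as Theorem \ref{th1.1}.
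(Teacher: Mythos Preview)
Your proposal has a genuine gap: you have not identified the mechanism by which the pressure-gradient hypothesis actually enters the estimates. In both the $L^{2}$ energy and the high-order estimate on $y(t)=1+\|u\|_{H^{3}}^{2}+\|(v,w)\|_{H^{2}}^{2}$, the pressure term is eliminated by incompressibility after pairing with $u$ or $\nabla\Delta u$, so there is no way to ``pull out a factor $\|\nabla\Pi\|_{\dot B^{0}_{\infty,\infty}}$'' from the trilinear fluid term directly; your suggestion of ``multiplying the momentum equation by $u|\nabla\Pi|$'' is not a standard device, the convective contribution $\int (u\cdot\nabla)u\cdot u|u|^{2}$ you invoke in fact vanishes, and the logarithmic Sobolev inequality you quote controls $\|\nabla u\|_{L^{\infty}}$ in terms of $\|\nabla u\|_{\dot B^{0}_{\infty,\infty}}$, not $\|\nabla\Pi\|_{\dot B^{0}_{\infty,\infty}}$. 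Even granting your differential inequality, the closure fails: to invoke \eqref{eq1.14} you must insert the denominator $(1+\ln(e+\|\nabla\Pi\|_{\dot B^{0}_{\infty,\infty}}))^{2/3}$ and bound it by $(\ln(e+y))^{2/3}=Y^{2/3}$, which turns your inequality for $Y=\ln(e+y)$ into $Y'\lesssim g(t)\,Y^{4/3}$ with $g\in L^{1}(0,T)$, a superlinear ODE that can blow up in finite time.

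The paper supplies the missing step: the pressure gradient enters through the $L^{4}$ energy estimate. Multiplying the momentum equation by $|u|^{2}u$, the convective term still vanishes but the pressure term survives as $I_{1}=-\int_{\mathbb{R}^{3}}\nabla\Pi\cdot u|u|^{2}\,dx$. One bounds $I_{1}\leq\|\nabla\Pi\|_{L^{4}}\|u\|_{L^{4}}^{3}$, uses the bilinear estimate of Lemma~\ref{le2.2} to get $\|\nabla\Pi\|_{L^{4}}^{2}\leq C\|\nabla\Pi\|_{L^{2}}\|\nabla\Pi\|_{BMO}$, controls $\|\nabla\Pi\|_{L^{2}}$ via Lemma~\ref{le2.6} by $\||u||\nabla u|\|_{L^{2}}+\|(\nabla v,\nabla w)\|_{L^{2}}^{1/2}$ (the first piece is absorbed by the dissipation $\||u||\nabla u|\|_{L^{2}}^{2}$), and applies the logarithmic inequality of Lemma~\ref{le2.3} to $\|\nabla\Pi\|_{BMO}$; this produces exactly $\|\nabla\Pi\|_{\dot B^{0}_{\infty,\infty}}^{2/3}$ with the correct logarithmic weight. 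Gronwall on a short tail $[T_{0},T)$ then gives $\|u(t)\|_{L^{4}}^{4}\leq C_{1}(e+Y(t))^{2C\sigma}$ with $\sigma>0$ arbitrarily small, where $Y(t)=\sup_{T_{0}\le\tau\le t}(\|\nabla\Delta u\|_{L^{2}}^{2}+\|(\Delta v,\Delta w)\|_{L^{2}}^{2})$. This $L^{4}$ bound is fed into the $H^{1}$ and then the $H^{3}\times H^{2}\times H^{2}$ estimates exactly as in Lemma~\ref{le3.1}; choosing $\sigma$ so small that the resulting exponent on $(e+Y)$ is at most $1$ makes the final Gronwall linear and closes the argument. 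The hierarchical $L^{4}\to H^{1}\to H^{3}$ cascade combined with this small-tail exponent trick is precisely what your outline is missing.
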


\noindent\textbf{Remark 1.3.}  If  $T_{*}<\infty$ is the maximal existence time of  the solution $(u,v,w)$, then by Theorem \ref{th1.2}, we have
\begin{align*}
\int_{0}^{T_{*}}\frac{\|\nabla \Pi(\cdot,t)\|_{\dot{B}^{0}_{\infty,\infty}}^{\frac{2}{3}}}{\big(1+\ln(e+\|\nabla \Pi(\cdot,t)\|_{\dot{B}^{0}_{\infty,\infty}})\big)^{\frac{2}{3}}}dt=\infty.
\end{align*}
\vskip0.1in

\noindent\textbf{Remark 1.4.}  To prove Theorems \ref{th1.1} and
\ref{th1.2}, the main crux lies in exploiting the relationship of the pressure $\Pi$ and the solution $(u,v,w)$, see Lemma \ref{le2.6} below. Moreover, for simplicity, we just consider initial data $u_{0}\in H^{3}(\mathbb{R}^{3})$, $(v_{0},w_{0}) \in H^{2}(\mathbb{R}^{3})$. As a  matter of fact, one can also prove the same result for initial data  $u_{0}\in H^{s}(\mathbb{R}^{3})$, $(v_{0},w_{0}) \in H^{s-1}(\mathbb{R}^{3})$ with $s>\frac{5}{2}$.
\vskip .1in

 The remainder of the paper is organized as follows. In Section 2,  we first present the Littlewood-Paley decomposition theory and  the definition of  the homogeneous  Besov space, then we list several crucial lemmas used to prove Theorems \ref{th1.1}  and \ref{th1.2}. Section 3 is devoted to derive the crucial a priori estimates of local smooth solutions,  while the last section  is devoted to the proof of Theorems \ref{th1.1} and \ref{th1.2}. Throughout the paper, we denote by $C$ and $C_{i}$ ($i=0,1, \cdots$) the harmless positive constants, which may depend on the initial data and its value may change from line to line, the special dependence will be pointed out explicitly in the text if necessary.

\section{ Preliminaries}

\subsection{Littlewood-Paley theory and Besov space}
We start with the Fourier transform. Let
$\mathcal{S}(\mathbb{R}^{3})$ be the Schwartz class of rapidly
decreasing function, and $\mathcal{S}'(\mathbb{R}^{3})$ of temperate distributions be the dual set of $\mathcal{S}(\mathbb{R}^{3})$. Given  $f\in\mathcal{S}(\mathbb{R}^{3})$, the Fourier transform $\mathcal{F}(f)$ is defined by
$$
  \mathcal{F}(f)(\xi):=\int_{\mathbb{R}^{3}}e^{-2\pi i x\cdot\xi}f(x)dx.
$$
More generally, the Fourier transform of  a tempered distribution $f\in\mathcal{S}'(\mathbb{R}^{3})$ is defined by the dual argument in the standard way.
\vskip.1in

We now introduce a dyadic partition of $\mathbb{R}^{3}$.
Choose $\varphi\in\mathcal{S}(\mathbb{R}^{3})$  such that $\varphi$ is even,
$$
  \text{supp}\varphi:=\mathcal{C}=\{\xi\in\mathbb{R}^{3},\ \frac{3}{4}\leq
|\xi|\leq\frac{8}{3}\}, \ \
\text{and}\ \ \varphi\in[0,1] \ \ \text{on} \ \ \mathcal{C}.
$$
Moreover, there holds
\begin{align*}
  \sum_{j\in\mathbb{Z}}\varphi(2^{-j}\xi)=1, \ \ \ \forall\xi\in\mathbb{R}^{3}\backslash\{0\}.
\end{align*}
Let $h=\mathcal{F}^{-1}\varphi$, where $\mathcal{F}^{-1}$ is the inverse Fourier transform. Then  for any $f\in\mathcal{S}'(\mathbb{R}^{3})$, we define the dyadic blocks
$\Delta_{j}$ and $S_{j}$ as follows:
\begin{align}\label{eq2.1}
 \Delta_{j}f: =2^{3j}\int_{\mathbb{R}^{3}}h(2^{j}y)f(x-y)dy \ \ \ \text{and}\ \ \
  S_{j}f:=\sum_{k\leq j-1}\Delta_{k}f.
\end{align}
 Informally, $\Delta_{j}$ is a
frequency projection to the annulus $\{|\xi|\sim 2^{j}\}$, while
$S_{j}$ is a frequency projection to the
ball $\{|\xi|\leq 2^{j}\}$.
\vskip.1in

Let $\mathcal{P}(\mathbb{R}^{3})$ be the class of all polynomials of $\mathbb{R}^{3}$ and denote by $\mathcal{S}'_{h}(\mathbb{R}^{3}):=\mathcal{S}'(\mathbb{R}^{3})/\mathcal{P}(\mathbb{R}^{3})$ the tempered distributions modulo polynomials. As a consequence, for any $f\in \mathcal{S}'_{h}(\mathbb{R}^{3})$, one has
\begin{equation*}
  f=\sum_{j\in\mathbb{Z}}\Delta_{j}f.
\end{equation*}
\vskip .1in

Next we give the definition of the homogeneous Besov space.

\begin{definition}\label{de2.1}
For $s\in \mathbb{R}$, $1\leq p,r\leq\infty$, the homogeneous Besov space is defined by
\begin{equation*}
  \dot{B}^{s}_{p,r}(\mathbb{R}^{3}):=\Big\{f\in \mathcal{S}'_{h}(\mathbb{R}^{3}):\ \
  \|f\|_{\dot{B}^{s}_{p,r}}<\infty\Big\},
\end{equation*}
where
\begin{equation*}
  \|f\|_{\dot{B}^{s}_{p,r}}:=
  \begin{cases} \left(\sum_{j\in\mathbb{Z}}2^{jsr}\|\Delta_{j}f\|_{L^{p}}^{r}\right)^{\frac{1}{r}}
  \ \ &\text{for}\ \ 1\leq r<\infty,\\
  \sup_{j\in\mathbb{Z}}2^{js}\|\Delta_{j}f\|_{L^{p}}\ \
  &\text{for}\ \
  r=\infty.
 \end{cases}
\end{equation*}
\end{definition}
It is easy to verify that the classical
homogeneous Sobolev space $\dot{H}^{s}(\mathbb{R}^{3})=\dot{B}^{s}_{2,2}(\mathbb{R}^{3})$
which can be characterized by an equivalent norm
$\|f\|_{\dot{H}^{s}}=\|(-\Delta)^{\frac{s}{2}}f\|_{L^{2}}$.

\subsection{Useful analytic tools}
The proofs of Theorems \ref{th1.1} and \ref{th1.2} require some crucial analytic tools related Besov spaces, thus we briefly recall  them here. We denote by $BMO$ the space of
bounded mean oscillation, and $H^{s}_{p}(\mathbb{R}^{3})$
with $s>0$ and $1<p<\infty$ the usual Sobolev space $\{f\in
L^{p}(\mathbb{R}^{3}):\ \  (-\Delta)^{\frac{s}{2}}f\in L^{p}(\mathbb{R}^{3})\}$. If $p=2$, we use $H^{s}(\mathbb{R}^{3})$ instead of $H^{s}_{2}(\mathbb{R}^{3})$.

\begin{lemma}\label{le2.2} {\em(\cite{KT00})}
Let $1<p<\infty$. Then we have
\begin{equation}\label{eq2.2}
  \|fg\|_{L^{p}}
  \leq C\big(\|f\|_{L^{p}}\|g\|_{BMO}+\|f\|_{BMO}\|g\|_{L^{p}}\big)
\end{equation}
for all $f,g \in BMO\cap L^{p}(\mathbb{R}^{3})$ with
the constant $C$ depending only on $p$.
\end{lemma}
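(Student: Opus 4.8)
The plan is to bound $fg$ through the Fefferman--Stein sharp maximal function, after a routine truncation that makes every quantity integrable. First I would reduce to the case $f,g\in L^{\infty}$. For $M>0$ set $f_{M}=\max(\min(f,M),-M)$ and define $g_{M}$ analogously; since the truncation map is $1$-Lipschitz one has $\|f_{M}\|_{L^{p}}\le\|f\|_{L^{p}}$ and $\|f_{M}\|_{BMO}\le C\|f\|_{BMO}$ (likewise for $g_{M}$), while $f_{M}\to f$ and $g_{M}\to g$ pointwise a.e. as $M\to\infty$. Hence, once \eqref{eq2.2} is established for bounded $f,g$, Fatou's lemma applied to $f_{M}g_{M}\to fg$ delivers the general case (and in particular $fg\in L^{p}$). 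So from now on assume $f,g\in L^{\infty}\cap L^{p}$, whence $fg\in L^{p}$, and one may invoke the Fefferman--Stein inequality $\|fg\|_{L^{p}}\le C_{p}\,\|M^{\#}(fg)\|_{L^{p}}$, where $M^{\#}$ is the sharp maximal operator and $M$ (resp. $M_{s}h:=(M(|h|^{s}))^{1/s}$) is the Hardy--Littlewood maximal operator.

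The core of the argument is a pointwise estimate of $M^{\#}(fg)$. Fix a cube $Q\ni x$ and let $f_{Q},g_{Q}$ be the means over $Q$; then
\[
fg-f_{Q}g_{Q}=(f-f_{Q})(g-g_{Q})+f_{Q}(g-g_{Q})+g_{Q}(f-f_{Q}).
\]
Choose an auxiliary exponent $s$ with $1<s<p$. To the first term I would apply H\"older's inequality with exponents $(s,s')$, using the higher-integrability form of the John--Nirenberg inequality $\big(\tfrac1{|Q|}\int_{Q}|g-g_{Q}|^{s'}\big)^{1/s'}\le C\|g\|_{BMO}$ together with the elementary bound $\big(\tfrac1{|Q|}\int_{Q}|f-f_{Q}|^{s}\big)^{1/s}\le C\,M_{s}f(x)$, which follows from $|f_{Q}|\le M_{s}f(x)$ and $\big(\tfrac1{|Q|}\int_{Q}|f|^{s}\big)^{1/s}\le M_{s}f(x)$. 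The remaining two terms are dominated by $|f_{Q}|\,\|g\|_{BMO}+|g_{Q}|\,\|f\|_{BMO}\le M_{s}f(x)\|g\|_{BMO}+M_{s}g(x)\|f\|_{BMO}$. Averaging over $Q$ and taking the supremum over cubes containing $x$ yields $M^{\#}(fg)(x)\le C\big(M_{s}f(x)\,\|g\|_{BMO}+M_{s}g(x)\,\|f\|_{BMO}\big)$.

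Finally I would take $L^{p}$ norms: since $s<p$, $M$ is bounded on $L^{p/s}$, so $\|M_{s}f\|_{L^{p}}=\|M(|f|^{s})\|_{L^{p/s}}^{1/s}\le C\|f\|_{L^{p}}$ and likewise for $g$, and combining this with the Fefferman--Stein inequality gives exactly \eqref{eq2.2}. The one delicate point is the cross term $(f-f_{Q})(g-g_{Q})$: the naive Cauchy--Schwarz bound $C\|f\|_{BMO}\|g\|_{BMO}$ is useless, since it retains no $L^{p}$-integrable factor and would make $\|M^{\#}(fg)\|_{L^{p}}$ infinite; one must instead keep a maximal-function factor $M_{s}f(x)$, which is precisely why the auxiliary exponent $s\in(1,p)$ (and hence the $L^{s'}$ version of John--Nirenberg) is needed, and why the range $1<p<\infty$ is the natural one.
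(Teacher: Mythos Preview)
Your argument is correct. The reduction to bounded functions, the decomposition $fg-f_{Q}g_{Q}=(f-f_{Q})(g-g_{Q})+f_{Q}(g-g_{Q})+g_{Q}(f-f_{Q})$, the use of John--Nirenberg in $L^{s'}$ to retain a maximal factor $M_{s}f(x)$ on the cross term, and the final appeal to Fefferman--Stein together with the $L^{p/s}$-boundedness of $M$ all go through as you describe. The only small remark is that the applicability of the Fefferman--Stein inequality $\|h\|_{L^{p}}\le C_{p}\|M^{\#}h\|_{L^{p}}$ does need some a~priori integrability of $h$ (to rule out constants), but this is exactly what your truncation step supplies since then $fg\in L^{p}$ and hence $M(fg)\in L^{p}$.

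As for comparison with the paper: there is nothing to compare, because the paper does not prove Lemma~\ref{le2.2} at all---it simply quotes the estimate from Kozono--Taniuchi \cite{KT00}. It may be worth knowing that the original proof in \cite{KT00} proceeds differently, via Littlewood--Paley theory and Bony's paraproduct decomposition $fg=T_{f}g+T_{g}f+R(f,g)$, estimating each piece separately in $L^{p}$ using the Carleson-measure characterisation of $BMO$. Your real-variable route through $M^{\#}$ is an equally standard alternative; it is arguably more elementary (no frequency decomposition, no paraproducts) and makes the role of John--Nirenberg very transparent, at the cost of being slightly less flexible if one later wants endpoint or Besov-space refinements, where the paraproduct machinery is more natural.
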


\begin{lemma}\label{le2.3} {\em(\cite{KOT02})}
For all $f\in H^{s-1}(\mathbb{R}^{3})$ with $s>\frac{5}{2}$, we have
\begin{equation}\label{eq2.3}
   \|f\|_{BMO}\leq
   C\big(1+\|f\|_{\dot{B}^{0}_{\infty,\infty}}\ln^{\frac{1}{2}}(e+\|f\|_{H^{s-1}})\big).
\end{equation}
\end{lemma}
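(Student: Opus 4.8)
The plan is to run a Littlewood--Paley decomposition of $f$ and to balance the number of retained ``intermediate'' frequencies against $\|f\|_{H^{s-1}}$. Fix $N\in\mathbb{N}$, to be chosen at the end, and split
\[
f=\sum_{j<-N}\Delta_{j}f+\sum_{|j|\le N}\Delta_{j}f+\sum_{j>N}\Delta_{j}f=:f_{\ell}+f_{m}+f_{h}
\]
(this is legitimate since $f\in H^{s-1}\subset L^{2}\subset\mathcal{S}'_{h}(\mathbb{R}^{3})$). The two tails $f_{\ell}$ and $f_{h}$ will be estimated in $L^{\infty}\hookrightarrow BMO$, and once $N$ is taken of size $\sim\ln(e+\|f\|_{H^{s-1}})$ they will contribute only an $O(1)$ constant; the whole point is that $f_{m}$ carries only $2N+O(1)$ nonzero dyadic blocks, each of size $\|f\|_{\dot{B}^{0}_{\infty,\infty}}$ in $L^{\infty}$, and must be estimated in $BMO$ in a way that sees the \emph{square-summability} of these blocks, producing a factor $\sqrt{N}$ rather than $N$. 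This is exactly where the $\ln^{1/2}$ comes from.

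For the low tail, Bernstein's inequality gives $\|\Delta_{j}f\|_{L^{\infty}}\le C2^{3j/2}\|\Delta_{j}f\|_{L^{2}}\le C2^{3j/2}\|f\|_{L^{2}}$, whence
\[
\|f_{\ell}\|_{BMO}\le C\|f_{\ell}\|_{L^{\infty}}\le C\sum_{j<-N}2^{3j/2}\|f\|_{L^{2}}\le C\,2^{-3N/2}\|f\|_{H^{s-1}}.
\]
For the high tail, Bernstein's inequality together with the Cauchy--Schwarz inequality in the index $j$ yields
\[
\|f_{h}\|_{BMO}\le C\|f_{h}\|_{L^{\infty}}\le C\sum_{j>N}2^{3j/2}\|\Delta_{j}f\|_{L^{2}}\le C\Big(\sum_{j>N}2^{(5-2s)j}\Big)^{1/2}\Big(\sum_{j>N}2^{2j(s-1)}\|\Delta_{j}f\|_{L^{2}}^{2}\Big)^{1/2}\le C\,2^{-(s-\frac52)N}\|f\|_{H^{s-1}},
\]
the first geometric series converging precisely because $s>\frac52$.

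For the intermediate part I would invoke the classical Carleson-measure description of $BMO$ (equivalently, $BMO=\dot{F}^{0}_{\infty,2}$): for any $g\in\mathcal{S}'_{h}(\mathbb{R}^{3})$,
\[
\|g\|_{BMO}^{2}\le C\sup_{Q}\frac{1}{|Q|}\int_{Q}\sum_{j:\,2^{j}\ell(Q)\ge1}|\Delta_{j}g(x)|^{2}\,dx,
\]
the supremum being over all cubes $Q\subset\mathbb{R}^{3}$. Since $\Delta_{j}f_{m}=0$ for $|j|>N+1$ while $\|\Delta_{j}f_{m}\|_{L^{\infty}}\le C\|f\|_{\dot{B}^{0}_{\infty,\infty}}$ for every $j$, for each cube $Q$ the inner sum has at most $2N+3$ nonzero terms, each pointwise bounded by $C^{2}\|f\|_{\dot{B}^{0}_{\infty,\infty}}^{2}$; hence
\[
\|f_{m}\|_{BMO}^{2}\le C\,(2N+3)\,\|f\|_{\dot{B}^{0}_{\infty,\infty}}^{2}\le C\,N\,\|f\|_{\dot{B}^{0}_{\infty,\infty}}^{2}.
\]

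Collecting the three estimates and setting $\alpha:=\min\{\tfrac32,\,s-\tfrac52\}>0$,
\[
\|f\|_{BMO}\le C\big(2^{-\alpha N}\|f\|_{H^{s-1}}+\sqrt{N}\,\|f\|_{\dot{B}^{0}_{\infty,\infty}}\big).
\]
Choosing $N$ to be the least integer with $N\ge\alpha^{-1}\log_{2}(e+\|f\|_{H^{s-1}})$ makes the first term at most $C$ and gives $\sqrt{N}\le C\,\ln^{1/2}(e+\|f\|_{H^{s-1}})$, which is exactly \eqref{eq2.3}. The main obstacle is the estimate for $f_{m}$: the crude bound $\|f_{m}\|_{BMO}\le C\|f_{m}\|_{L^{\infty}}\le CN\|f\|_{\dot{B}^{0}_{\infty,\infty}}$ only yields a full power of the logarithm, so one genuinely has to exploit the $\ell^{2}$ (square-function / John--Nirenberg) structure built into $BMO$ in order to gain the square root; everything else is routine frequency bookkeeping.
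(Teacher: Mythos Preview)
The paper does not supply a proof of Lemma~\ref{le2.3}; it merely cites the result from Kozono--Ogawa--Taniuchi~\cite{KOT02}. Your argument is correct and is in fact precisely the strategy used in~\cite{KOT02}: a three-part frequency splitting, Bernstein-type bounds on the low and high tails (which is where the hypothesis $s>5/2$ enters, via convergence of $\sum_{j>N}2^{(5-2s)j}$), and the crucial $\ell^{2}$-in-$j$ estimate on the middle block obtained from the Carleson/$\dot{F}^{0}_{\infty,2}$ description of $BMO$, followed by optimisation in $N$. So there is nothing substantive to compare---your proposal reconstructs the original proof.
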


\begin{lemma}\label{le2.4} {\em(\cite{KP88})}
For $s>1$, we have
\begin{equation}\label{eq2.4}
   \|\nabla^{s}(fg)-f\nabla^{s}g\|_{L^{p}}\leq
   C\big(\|\nabla f\|_{L^{p_{1}}}\|\nabla^{s-1}g\|_{L^{q_{1}}}+\|\nabla^{s}f\|_{L^{p_{2}}}\|g\|_{L^{q_{2}}}\big)
\end{equation}
with $1<p, q_{1}$, $p_{2}<\infty$ such that
$\frac{1}{p}=\frac{1}{p_{1}}+\frac{1}{q_{1}}=\frac{1}{p_{2}}+\frac{1}{q_{2}}$.
\end{lemma}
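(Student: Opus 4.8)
The estimate \eqref{eq2.4} is the classical Kato--Ponce commutator inequality; for completeness let me sketch how one would prove it with the Littlewood--Paley calculus set up above, writing $\Lambda^{s}:=(-\Delta)^{s/2}$ for $\nabla^{s}$. The plan is to decompose the product by Bony's paraproduct,
\[
fg=\sum_{j\in\Z}S_{j-1}f\,\Delta_{j}g+\sum_{j\in\Z}S_{j-1}g\,\Delta_{j}f+\sum_{|j-k|\leq1}\Delta_{j}f\,\Delta_{k}g=:T_{f}g+T_{g}f+R(f,g),
\]
to expand $f\Lambda^{s}g=T_{f}(\Lambda^{s}g)+T_{\Lambda^{s}g}f+R(f,\Lambda^{s}g)$ in the same way, and to subtract, so that $\Lambda^{s}(fg)-f\Lambda^{s}g$ splits into
\[
\mathrm{I}:=\sum_{j\in\Z}\big[\Lambda^{s},S_{j-1}f\big]\Delta_{j}g,\qquad
\mathrm{II}:=\Lambda^{s}\big(T_{g}f\big)-T_{\Lambda^{s}g}f,\qquad
\mathrm{III}:=\Lambda^{s}R(f,g)-R(f,\Lambda^{s}g).
\]
Only the first piece carries genuine cancellation, and it is the one that produces the gain of a full derivative.

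To treat $\mathrm{I}$, I would work block by block. The $j$-th summand is spectrally localized at frequency $2^{j}$, and there the symbol $|\xi|^{s}$ may be written as $2^{js}\psi(2^{-j}D)$ with $\psi$ a fixed Schwartz function; hence the commutator has the convolution representation
\[
\big[\Lambda^{s},S_{j-1}f\big]\Delta_{j}g(x)=-\int_{\R^{3}}K_{j}(z)\Big(\int_{0}^{1}z\cdot\nabla S_{j-1}f(x-\tau z)\,d\tau\Big)\Delta_{j}g(x-z)\,dz,
\]
where $K_{j}=\mathcal{F}^{-1}\big(2^{js}\psi(2^{-j}\cdot)\big)$ satisfies $\int_{\R^{3}}|z|\,|K_{j}(z)|\,dz\leq C2^{j(s-1)}$ by scaling. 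Minkowski's inequality followed by H\"older's inequality (with $\tfrac1p=\tfrac1{p_{1}}+\tfrac1{q_{1}}$) yields
\[
\big\|[\Lambda^{s},S_{j-1}f]\Delta_{j}g\big\|_{L^{p}}\leq C2^{j(s-1)}\|\nabla S_{j-1}f\|_{L^{p_{1}}}\|\Delta_{j}g\|_{L^{q_{1}}}\leq C2^{j(s-1)}\|\nabla f\|_{L^{p_{1}}}\|\Delta_{j}g\|_{L^{q_{1}}},
\]
the last step using the uniform $L^{p_{1}}$-boundedness of $S_{j-1}$ for $1<p_{1}<\infty$. After the usual frequency-gap choice the summands are spectrally supported in dyadic annuli, so summing in $j$ via the Littlewood--Paley square function and then the Fefferman--Stein vector-valued maximal inequality converts $\big(\sum_{j}2^{2j(s-1)}|\Delta_{j}g|^{2}\big)^{1/2}$ into $\|\Lambda^{s-1}g\|_{L^{q_{1}}}$ and bounds $\mathrm{I}$ by the first term on the right of \eqref{eq2.4}.

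For $\mathrm{II}$ and $\mathrm{III}$ no cancellation is needed, only triangle inequalities: in $T_{g}f$ and $R(f,g)$ the operator $\Lambda^{s}$ acts on a product whose spectrum already sits at (or below) the top frequency $2^{j}$, so Bernstein's inequality turns $\Lambda^{s}$ into a factor $2^{js}$; pairing the resulting block of $f$ with the corresponding low factor of $g$ through H\"older with $\tfrac1p=\tfrac1{p_{2}}+\tfrac1{q_{2}}$, and likewise estimating the subtracted terms $T_{\Lambda^{s}g}f$ and $R(f,\Lambda^{s}g)$ after applying Bernstein on a ball, one gets a sum of blocks at frequency $2^{j}$ of size $\lesssim 2^{js}\|\Delta_{j}f\|_{L^{p_{2}}}\|g\|_{L^{q_{2}}}$; summing as before (the geometric series converging because $s>1$) bounds $\mathrm{II}+\mathrm{III}$ by $\|\nabla^{s}f\|_{L^{p_{2}}}\|g\|_{L^{q_{2}}}$. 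The step I expect to be the real obstacle is the bookkeeping in $\mathrm{I}$: recovering the sharp norms $\|\nabla f\|_{L^{p_{1}}}$ and $\|\Lambda^{s-1}g\|_{L^{q_{1}}}$ --- rather than weaker Besov-type norms --- from the dyadic sum forces one through the vector-valued maximal theorem, which is precisely where the hypotheses $1<p,q_{1},p_{2}<\infty$ enter. An alternative route that sidesteps this bookkeeping is to read \eqref{eq2.4} directly off the Coifman--Meyer bilinear multiplier theorem, applied to the symbol $|\xi+\eta|^{s}-|\eta|^{s}$ after splitting it according to which of $|\xi|$, $|\eta|$ dominates.
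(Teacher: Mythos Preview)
The paper does not prove Lemma~\ref{le2.4} at all: it is simply stated with a citation to Kato--Ponce \cite{KP88} and used as a black box later in the $H^{3}$ energy estimate. So your sketch goes well beyond what the paper provides, and there is nothing in the paper to compare it against line by line.

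Your paraproduct strategy is the standard modern route to \eqref{eq2.4} and is essentially sound. One small point worth tightening: in your treatment of $\mathrm{II}$ you claim that the subtracted piece $T_{\Lambda^{s}g}f$ is controlled by $\|\Lambda^{s}f\|_{L^{p_{2}}}\|g\|_{L^{q_{2}}}$ after ``Bernstein on a ball''. Bernstein gives only a norm bound $\|S_{j-1}(\Lambda^{s}g)\|_{L^{q_{2}}}\lesssim 2^{js}\|g\|_{L^{q_{2}}}$, not a pointwise one, and summing the block norms does not directly yield the $L^{p}$ norm of the sum; if instead you pass through the square function and the pointwise bound $\sup_{j}|S_{j-1}(\Lambda^{s}g)|\leq M(\Lambda^{s}g)$, you end up with $\|\Lambda^{s}g\|$ on the right, which is not allowed. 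The clean fix is to route $T_{\Lambda^{s}g}f$ to the \emph{first} term on the right of \eqref{eq2.4}: write $|S_{j-1}(\Lambda^{s}g)|\lesssim\sum_{k\leq j-2}2^{k}|\Delta_{k}\Lambda^{s-1}g|\lesssim 2^{j}M(\Lambda^{s-1}g)$, which after the square-function step pairs with $\|\nabla f\|_{L^{p_{1}}}\|\Lambda^{s-1}g\|_{L^{q_{1}}}$. With that adjustment (and the analogous care in $\mathrm{III}$), your outline is the textbook paraproduct proof; the Coifman--Meyer alternative you mention at the end is also a perfectly valid shortcut.
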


\begin{lemma}\label{le2.5}{\em (\cite{MGO96})}
Let $1<p<q<\infty$, and let $s=\beta(\frac{q}{p}-1)>0$. Then there
exists a constant $C$ depending only on $\beta$, $p$ and $q$
such that the estimate
\begin{equation}\label{eq2.5}
  \|f\|_{L^{q}}\leq
  C\|f\|_{\dot{B}^{-\beta}_{\infty,\infty}}^{1-\frac{p}{q}}\|(-\Delta)^{\frac{s}{2}}f\|_{L^{p}}^{\frac{p}{q}}
\end{equation}
holds for all $f\in
\dot{H}^{s}_{p}(\mathbb{R}^{3})\cap\dot{B}^{-\beta}_{\infty,\infty}(\mathbb{R}^{3})$.
\end{lemma}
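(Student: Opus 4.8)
The plan is to establish \eqref{eq2.5} through the Littlewood--Paley square function combined with a pointwise frequency splitting, using the scaling relation $s=\beta(q/p-1)$ in an essential way. Write $A:=\|f\|_{\dot{B}^{-\beta}_{\infty,\infty}}$ and $B:=\|(-\Delta)^{s/2}f\|_{L^{p}}$, and set $g:=(-\Delta)^{s/2}f$, so that $\|g\|_{L^{p}}=B$. First I would record the algebraic consequence of the hypothesis: since $s=\beta(q-p)/p$ one has $\beta+s=\beta q/p$, hence $\frac{\beta}{\beta+s}=\frac{p}{q}$ and $\frac{s}{\beta+s}=1-\frac{p}{q}$. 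By homogeneity and density it suffices to treat $f\in\mathcal{S}'_{h}(\mathbb{R}^{3})$ with $A,B<\infty$.

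The heart of the argument is a pair of pointwise bounds on the dyadic blocks $\Delta_{j}f$. From Definition \ref{de2.1} one has the low-frequency bound $|\Delta_{j}f(x)|\le\|\Delta_{j}f\|_{L^{\infty}}\le 2^{j\beta}A$ for every $j$ and $x$. For the high-frequency bound I write $\Delta_{j}f=\Delta_{j}(-\Delta)^{-s/2}g$ and factor the associated Fourier multiplier as $\varphi(2^{-j}\xi)|\xi|^{-s}=2^{-js}\psi(2^{-j}\xi)\tilde{\varphi}(2^{-j}\xi)$, where $\psi(\eta)=|\eta|^{-s}\varphi(\eta)$ is smooth and supported in the annulus $\mathcal{C}$ (so $|\xi|^{-s}$ is harmless there) and $\tilde{\varphi}\equiv1$ on $\mathrm{supp}\,\varphi$. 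Since convolution against the $L^{1}$-normalised kernel attached to $\psi(2^{-j}\,\cdot\,)$ is pointwise dominated by the Hardy--Littlewood maximal operator $M$, uniformly in $j$, this gives $|\Delta_{j}f(x)|\le C2^{-js}M(\tilde{\Delta}_{j}g)(x)$, where $\tilde{\Delta}_{j}$ is the multiplier with symbol $\tilde{\varphi}(2^{-j}\xi)$. Setting $G(x):=\sup_{j}M(\tilde{\Delta}_{j}g)(x)$ and using $G\le M(\sup_{k}|\tilde{\Delta}_{k}g|)\le CM(Mg)$ together with the $L^{p}$-boundedness of $M$ (valid for $p>1$) applied twice, I obtain $\|G\|_{L^{p}}\le CB$.

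Next I would estimate the square function $Sf:=\big(\sum_{j}|\Delta_{j}f|^{2}\big)^{1/2}$ pointwise by splitting the sum at a level $J=J(x)\in\mathbb{Z}$: the two bounds give $Sf(x)^{2}\le CA^{2}\sum_{j<J}2^{2j\beta}+CG(x)^{2}\sum_{j\ge J}2^{-2js}\le CA^{2}2^{2J\beta}+CG(x)^{2}2^{-2Js}$, both geometric series converging because $\beta,s>0$. Optimising over $J$ (choosing $J$ nearest the balancing value $2^{2J(\beta+s)}\sim G(x)^{2}/A^{2}$) and inserting the exponents $\frac{\beta}{\beta+s}=\frac{p}{q}$, $\frac{s}{\beta+s}=1-\frac{p}{q}$ yields the clean pointwise bound $Sf(x)\le CA^{1-p/q}G(x)^{p/q}$. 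The Littlewood--Paley characterisation $\|f\|_{L^{q}}\le C\|Sf\|_{L^{q}}$ (valid for $1<q<\infty$, and $q>p>1$ here) then gives $\|f\|_{L^{q}}\le CA^{1-p/q}\|G^{p/q}\|_{L^{q}}=CA^{1-p/q}\|G\|_{L^{p}}^{p/q}\le CA^{1-p/q}B^{p/q}$, which is exactly \eqref{eq2.5}.

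The hard part is arranging the frequency split so that the conclusion is the strong $L^{q}$ norm rather than merely a weak-type (Lorentz) bound: summing the blocks crudely in $\ell^{1}$ only produces $L^{q,\infty}$, and routing the tail through an $\ell^{p}$ sum forces the embedding $L^{p}\hookrightarrow\dot{B}^{0}_{p,p}$, which fails for $1<p<2$. The resolution, and the delicate point to get right, is to work with the $\ell^{2}$ square function (so that $L^{q}$ is recovered exactly) while dominating the entire high-frequency tail by the single quantity $G$ multiplied by the summable geometric factor $\sum_{j\ge J}2^{-2js}$, and to verify that the relation $s=\beta(q/p-1)$ makes the two frequency weights balance so that the exponents emerge as precisely $1-\frac{p}{q}$ and $\frac{p}{q}$. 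The auxiliary ingredients I rely on — the maximal domination of a Riesz-potential-smoothed dyadic block, the $L^{p}$-boundedness of $M$, and the square function characterisation of $L^{q}$ — are all standard for $1<p,q<\infty$.
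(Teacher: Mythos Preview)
Your argument is correct and is in fact the classical proof of this inequality due to Meyer--Gerard--Oru. Note, however, that the paper does not supply its own proof of Lemma~\ref{le2.5}: the result is simply quoted from \cite{MGO96}, and the paper only remarks that the special case $s=\beta=1$, $p=2$, $q=4$ (inequality \eqref{eq2.6}) can also be found in \cite{GG11}. So there is no ``paper's proof'' to compare against here; you have provided a sound derivation where the paper merely cites the literature.

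One harmless redundancy: the factorisation through $\tilde{\varphi}$ and $\tilde{\Delta}_{j}$ is not needed, since the kernel attached to $\psi(2^{-j}\,\cdot\,)$ already has an integrable radially decreasing majorant and hence $|\Delta_{j}f(x)|\le C\,2^{-js}Mg(x)$ directly. Taking $G=Mg$ then avoids the double application of the maximal operator, but your version is of course also valid.
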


Actually, we shall use the following specific form
of \eqref{eq2.5} by taking $s=\beta=1$, $p=2$ and $q=4$:
\begin{equation}\label{eq2.6}
  \|f\|_{L^{4}}\leq
  C\|f\|_{\dot{B}^{-1}_{\infty,\infty}}^{\frac{1}{2}}\|(-\Delta)^{\frac{1}{2}}f\|_{L^{2}}^{\frac{1}{2}},
\end{equation}
which the proof can be found in \cite{GG11}.

\vskip.1in

Let us complete this section by presenting the following lemma in terms of pressure and the gradient of pressure which will play  an important role in Section 3.

\begin{lemma}\label{le2.6}
Let $(u,v,w)$ be a smooth solution of the system \eqref{eq1.1}, $\Pi$ and $\Psi$ are determined by the momentum (first) equations and the Poisson (fifth) equation of \eqref{eq1.1}.  Then for any $1<q<\infty$,  we have
\begin{equation}\label{eq2.7}
  \|\Pi\|_{L^{q}}\leq
  C(\|u\|_{L^{2q}}^{2}+\|\nabla \Psi\|_{L^{2q}}^{2})
\end{equation}
and
\begin{equation}\label{eq2.8}
  \|\nabla \Pi\|_{L^{q}}\leq
  C(\||u||\nabla u|\|_{L^{q}}+\||\nabla\Psi||\Delta\Psi|\|_{L^{q}}),
\end{equation}
where $C$ is a constant depending only on $q$.
\end{lemma}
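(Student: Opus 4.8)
The plan is to derive an explicit representation of $\Pi$ and of $\nabla\Pi$ by applying the divergence operator to the momentum equation, inverting the Laplacian, and then invoking the $L^{q}$-boundedness of the Riesz transforms. Taking $\nabla\cdot$ of the first equation in \eqref{eq1.1} and using $\nabla\cdot u=0$, the terms $\partial_{t}(\nabla\cdot u)$ and $\Delta(\nabla\cdot u)$ vanish, so that
\[
-\Delta\Pi=\nabla\cdot\big((u\cdot\nabla)u\big)-\nabla\cdot(\Delta\Psi\nabla\Psi).
\]
Since $\nabla\cdot u=0$ we may write $(u\cdot\nabla)u=\nabla\cdot(u\otimes u)$, so the first term equals $\sum_{i,j}\partial_{i}\partial_{j}(u_{i}u_{j})$; and by \eqref{eq1.2} the Lorentz force is itself a divergence, $\Delta\Psi\nabla\Psi=\nabla\cdot\sigma$, so the second term equals $\sum_{i,j}\partial_{i}\partial_{j}[\sigma]_{ij}$. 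Inverting $-\Delta$ — legitimate for the smooth solutions under consideration, the pressure being determined modulo harmonic polynomials, that is, as an element of $\mathcal{S}'_{h}$ — and using the operator identities $(-\Delta)^{-1}\partial_{i}\partial_{j}=\mathcal{R}_{i}\mathcal{R}_{j}$ together with $\sum_{i}\mathcal{R}_{i}\mathcal{R}_{i}=-\,\mathrm{Id}$, one arrives at the analogue of \eqref{eq1.6},
\[
\Pi=\sum_{i,j}\mathcal{R}_{i}\mathcal{R}_{j}(u_{i}u_{j})-\sum_{i,j}\mathcal{R}_{i}\mathcal{R}_{j}\big(\partial_{i}\Psi\,\partial_{j}\Psi\big)-\tfrac{1}{2}|\nabla\Psi|^{2}.
\]

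To obtain \eqref{eq2.7}, I would apply the Calder\'{o}n--Zygmund estimate: for $1<q<\infty$ each operator $\mathcal{R}_{i}\mathcal{R}_{j}$ is bounded on $L^{q}$, so, bounding $|u_{i}u_{j}|\le|u|^{2}$ and $|\partial_{i}\Psi\,\partial_{j}\Psi|\le|\nabla\Psi|^{2}$ pointwise and using $\|\,|f|^{2}\,\|_{L^{q}}=\|f\|_{L^{2q}}^{2}$, one gets
\[
\|\Pi\|_{L^{q}}\le C\big(\|\,|u|^{2}\,\|_{L^{q}}+\|\,|\nabla\Psi|^{2}\,\|_{L^{q}}\big)=C\big(\|u\|_{L^{2q}}^{2}+\|\nabla\Psi\|_{L^{2q}}^{2}\big),
\]
which is \eqref{eq2.7}.

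For \eqref{eq2.8} the essential point is to put one derivative on the equation for $\Pi$ \emph{before} inverting the Laplacian, in such a way that every singular integral operator that appears has order zero. Starting from $-\Delta\Pi=\sum_{i,j}\partial_{i}\partial_{j}(u_{i}u_{j})-\sum_{j}\partial_{j}(\Delta\Psi\,\partial_{j}\Psi)$ and applying $\partial_{k}$, I would write
\[
\partial_{k}\Pi=\sum_{i,j}\mathcal{R}_{k}\mathcal{R}_{i}\big(\partial_{j}(u_{i}u_{j})\big)-\sum_{j}\mathcal{R}_{k}\mathcal{R}_{j}\big(\Delta\Psi\,\partial_{j}\Psi\big),
\]
using $(-\Delta)^{-1}\partial_{k}\partial_{i}\partial_{j}=\mathcal{R}_{k}\mathcal{R}_{i}\circ\partial_{j}$ and $(-\Delta)^{-1}\partial_{k}\partial_{j}=\mathcal{R}_{k}\mathcal{R}_{j}$. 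Since $\nabla\cdot u=0$ we have $\partial_{j}(u_{i}u_{j})=u_{j}\partial_{j}u_{i}$ (the $i$-th component of $(u\cdot\nabla)u$), which is pointwise dominated by $|u|\,|\nabla u|$, while $\Delta\Psi\,\partial_{j}\Psi$ is pointwise dominated by $|\Delta\Psi|\,|\nabla\Psi|$; applying once more the $L^{q}$-boundedness of $\mathcal{R}_{k}\mathcal{R}_{i}$ and $\mathcal{R}_{k}\mathcal{R}_{j}$ yields \eqref{eq2.8}.

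The computation is elementary; the only delicate point I expect is the bookkeeping in the last step — one must transfer exactly one derivative onto the nonlinearity and keep the remaining two together with $(-\Delta)^{-1}$ inside a double Riesz transform, since the naive operator $(-\Delta)^{-1}\partial_{k}\partial_{i}\partial_{j}$ has positive order and is not bounded on $L^{q}$. It is also worth recording that all the representation formulas above are understood in $\mathcal{S}'_{h}$, i.e.\ the harmonic-polynomial ambiguity of $\Pi$ is irrelevant here, which is automatic for the class of solutions considered.
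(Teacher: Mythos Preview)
Your proposal is correct and follows essentially the same route as the paper: take the divergence of the momentum equation, use \eqref{eq1.2} to write the Lorentz force in divergence form, invert the Laplacian via Riesz transforms, and apply the Calder\'on--Zygmund $L^{q}$-boundedness. Your treatment of \eqref{eq2.8}, where you explicitly factor $(-\Delta)^{-1}\partial_{k}\partial_{i}\partial_{j}=\mathcal{R}_{k}\mathcal{R}_{i}\circ\partial_{j}$ so that only order-zero singular integrals remain, is in fact more detailed than the paper's, which simply asserts that both estimates follow from the Calder\'on--Zygmund inequality after displaying the formula for $\Pi$.
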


\begin{proof}
Notice that by \eqref{eq1.2}, we actually have
\begin{equation}\label{eq2.9}
   \Delta\Psi\nabla\Psi=\nabla\cdot(\nabla\Psi\otimes\nabla\Psi)-\frac{1}{2}\nabla(|\nabla\Psi|^{2}).
\end{equation}
Taking $\nabla\cdot$ on both sides of the first equations of \eqref{eq1.1},  with the aid of  the divergence free condition $\nabla\cdot u=0$ and the force balance relation \eqref{eq2.9}, one can obtain
\begin{equation}\label{eq2.10}
    -\Delta \Pi=\nabla\cdot((u\cdot\nabla) u-\Delta\Psi\nabla\Psi)=\sum_{i,j=1}^{3}\partial_{x_{i}}\partial_{x_{j}}(u_{i}u_{j}-\partial_{x_{i}}\Psi\partial_{x_{j}}\Psi)+\frac{1}{2}\Delta(|\nabla\Psi|^{2}),
\end{equation}
which yields to
\begin{equation*}
   \Pi=\sum_{i,j=1}^{3}\mathcal{R}_{i}\mathcal{R}_{j}(u_{i}u_{j}-\partial_{x_{i}}\Psi\partial_{x_{j}}\Psi)-\frac{1}{2}|\nabla\Psi|^{2}.
\end{equation*}
Recall that $\mathcal{R}_{i}$ ($i=1,2,3$) are Riesz operators, the Calderon-Zygmund inequality implies that
\begin{equation*}
    \|\Pi\|_{L^{q}}\leq C(\|u\|_{L^{2q}}^{2}+\|\nabla\Psi\|_{L^{2q}}^{2})
\end{equation*}
and
\begin{equation*}
    \|\nabla\Pi\|_{L^{q}}\leq C(\||u||\nabla u|\|_{L^{q}}+\||\nabla\Psi||\Delta\Psi|\|_{L^{q}}).
\end{equation*}
The proof of Lemma \ref{le2.6} is complete.
\end{proof}

\section{A Priori Estimates}

 The proofs of Theorems \ref{th1.1} and \ref{th1.2} are essentially based on the following three a priori estimates for local smooth solutions of the system \eqref{eq1.1}.
\begin{lemma}\label{le3.1}
Let  $u_{0}\in H^{3}(\mathbb{R}^{3})$ with $\nabla\cdot u_{0}=0$, $v_{0}, w_{0}\in L^{1}(\mathbb{R}^{3})\cap
H^{2}(\mathbb{R}^{3})$ with $v_{0}, w_{0}\geq 0$, and let $(u,v,w)$ be the corresponding local smooth solution to the system
\eqref{eq1.1} on $[0,T)$  satisfying the condition
\eqref{eq1.12}. Then we have
\begin{align}\label{eq3.1}
   \sup_{0\leq t<T}(\|u(\cdot,t)\|_{H^{3}}+\|(v(\cdot,t),w(\cdot,t))\|_{H^{2}})\leq C_{0},
\end{align}
where $C_{0}$ is a constant depending only on $\|u_{0}\|_{H^{3}}$, $\|(v_{0}, w_{0})\|_{L^{1}\cap H^{2}}$, $T$.
\end{lemma}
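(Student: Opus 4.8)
\textbf{Proof proposal for Lemma \ref{le3.1}.}

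The plan is to run a standard energy method but driven by the pressure condition \eqref{eq1.12} rather than by velocity. First I would record the basic a priori bounds that do not require the regularity hypothesis: the $L^{2}$ energy estimate for $u$ (testing the momentum equation against $u$ and using $\int \Delta\Psi\nabla\Psi\cdot u = -\int \sigma:\nabla u$), together with the $L^{1}$ conservation and nonnegativity of $v,w$ (which follow from the structure of the drift-diffusion equations and $v_{0},w_{0}\geq 0$), and the resulting control of $\|\nabla\Psi\|_{L^{p}}$ via $\Delta\Psi=v-w$ and Calderón--Zygmund/Sobolev estimates. These give $u\in L^{\infty}_{t}L^{2}\cap L^{2}_{t}\dot H^{1}$ and some low-order integrability for $(v,w,\nabla\Psi)$, uniformly on $[0,T)$. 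The key quantity to propagate is $H(t):=\|u(\cdot,t)\|_{\dot H^{1}}^{2}+\|(v,w)(\cdot,t)\|_{\dot H^{1}}^{2}$ (and ultimately $\|u\|_{H^{3}}^{2}+\|(v,w)\|_{H^{2}}^{2}$); once $H$ is bounded the higher $H^{3}/H^{2}$ bounds follow by a routine linear-in-highest-derivative Grönwall argument using Lemma \ref{le2.4}, so I would treat the $\dot H^{1}$ (or rather the $H^{1}\times L^{2}$-type) estimate as the heart of the matter.

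Next I would carry out the differential inequality for $H(t)$. Testing the momentum equation against $-\Delta u$ produces the usual $\int (u\cdot\nabla)u\cdot\Delta u$ term and the Lorentz term; the standard trick here is to rewrite the convection contribution using the pressure: since $\int (u\cdot\nabla u + \nabla\Pi)\cdot\Delta u$ combines with incompressibility, one gets a term of the form $\int \Pi\, (\partial_i u_j)(\partial_j u_i)$ or $\int \nabla\Pi\cdot(u\cdot\nabla u)$-type expressions, which is exactly where $\Pi$ enters. Bounding that term by $\|\Pi\|_{BMO}\|\nabla u\|_{L^{2}}^{2}$ (via Lemma \ref{le2.2}, or a paraproduct/commutator estimate) and similarly handling the Nernst--Planck and Poisson couplings (the terms $\int \nabla\cdot(v\nabla\Psi)\Delta v$ etc., estimated using $\|\nabla\Psi\|_{L^\infty}$-type quantities controlled by $\|v-w\|$ and interpolation, absorbing top-order pieces into the dissipation $\|\nabla v\|_{\dot H^1}^2$, $\|\nabla w\|_{\dot H^1}^2$), I would arrive at a bound of the shape
\begin{align*}
\frac{d}{dt}H(t)+\text{(dissipation)}\leq C\big(1+\|\Pi\|_{BMO}\big)H(t)+\text{(lower-order terms absorbable by dissipation)}.
\end{align*}
Then apply Lemma \ref{le2.3} with $s=3$, i.e. $\|\Pi\|_{BMO}\leq C(1+\|\Pi\|_{\dot B^{0}_{\infty,\infty}}\ln^{1/2}(e+\|\Pi\|_{H^{2}}))$, and feed in Lemma \ref{le2.6} to replace $\|\Pi\|_{H^2}$ by something like $\|u\|_{H^3}^2+\|\nabla\Psi\|_{H^3}^2$, hence by a power of the full norm $Y(t):=e+\|u\|_{H^3}^2+\|(v,w)\|_{H^2}^2$. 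Combining with the higher-order estimates one obtains $\frac{d}{dt}Y(t)\leq C\,g(t)\,Y(t)\ln Y(t)$ where $g(t)=\frac{\|\Pi\|_{\dot B^0_{\infty,\infty}}}{\sqrt{1+\ln(e+\|\Pi\|_{\dot B^0_{\infty,\infty}})}}$ (roughly; one needs the elementary inequality $a\ln^{1/2}(e+b)\leq \varepsilon\ln b + C_\varepsilon \frac{a^2}{1+\ln(e+a)}$-type splitting to match $g$), and since $\int_0^T g(t)\,dt<\infty$ by \eqref{eq1.12}, a log-Grönwall inequality ($\frac{d}{dt}\ln\ln Y\leq Cg$) yields $\sup_{[0,T)}Y(t)<\infty$, which is \eqref{eq3.1}.

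The main obstacle I expect is the bookkeeping of the Poisson--Nernst--Planck coupling terms: unlike the pure Navier--Stokes case one must simultaneously close estimates for $(v,w)$ in $H^{2}$ and for $\nabla\Psi$ (gaining two derivatives from $\Delta\Psi=v-w$), and control the nonlinear drift terms $\nabla\cdot(v\nabla\Psi)$, $\nabla\cdot(w\nabla\Psi)$ and the Lorentz force $\Delta\Psi\nabla\Psi$ in the $u$-equation without losing the gain provided by the pressure structure. The delicate points are: (i) ensuring every top-order term coming from these couplings is either absorbed into the parabolic dissipation $\|\nabla u\|_{\dot H^2}^2+\|\nabla v\|_{\dot H^1}^2+\|\nabla w\|_{\dot H^1}^2$ or bounded by $C(1+\|\Pi\|_{BMO})Y$, and (ii) verifying that the $BMO$ estimate of $\Pi$ genuinely closes the loop, i.e. that the $H^{2}$-norm of $\Pi$ appearing inside the logarithm in Lemma \ref{le2.3} is controlled by $Y(t)$ via Lemma \ref{le2.6} (this needs $\||u||\nabla u|\|_{H^1}$ and $\||\nabla\Psi||\Delta\Psi|\|_{H^1}$-type product estimates, hence Lemma \ref{le2.4} and Sobolev embeddings in $\mathbb R^3$). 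Once those are in place the log-Grönwall step is routine. The condition \eqref{eq1.13} is handled analogously, using Lemma \ref{le2.5} (in the form \eqref{eq2.6}) together with Lemma \ref{le2.6} to estimate $\|u\|_{L^4}$ and $\|\nabla\Psi\|_{L^4}$ by $\|\Pi\|_{\dot B^{-1}_{\infty,\infty}}^{1/2}$ times a fractional-derivative norm, producing the $\frac{\|\Pi\|_{\dot B^{-1}_{\infty,\infty}}^2}{1+\ln(\cdots)}$ weight instead.
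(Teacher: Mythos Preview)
Your proposal has a genuine gap at its central step. When you test the momentum equation against $-\Delta u$, the pressure term $\int_{\mathbb{R}^3}\nabla\Pi\cdot\Delta u\,dx$ vanishes identically because $\nabla\cdot\Delta u=0$; there is no ``standard trick'' that resurrects a term of the form $\int \Pi\,(\partial_i u_j)(\partial_j u_i)$ from the $\dot H^1$ estimate. What remains is the trilinear term $-\int(\partial_k u_j)(\partial_j u_i)(\partial_k u_i)\,dx$, which scales like $\int|\nabla u|^3$ and cannot be bounded by $C\|\Pi\|_{BMO}\|\nabla u\|_{L^2}^2$: via Lemma~\ref{le2.6} the quantity $\|\Pi\|_{BMO}$ carries information about $|u|^2$ (roughly $\|u\|_{L^\infty}^2$), not about $\nabla u$, so the inequality you need simply fails. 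Consequently your differential inequality $\frac{d}{dt}H\le C(1+\|\Pi\|_{BMO})H$ is not available, and the log-Gr\"onwall closure built on it collapses.

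The paper avoids this by inserting an intermediate $L^4$ estimate: one multiplies the momentum equation by $|u|^2u$, and now the pressure term $-\int\nabla\Pi\cdot|u|^2u=\int\Pi\,\nabla\cdot(|u|^2u)$ does \emph{not} vanish. It is bounded by $\|\Pi\|_{L^4}\|u\|_{L^4}\||u||\nabla u|\|_{L^2}$, and Lemma~\ref{le2.2} gives $\|\Pi\|_{L^4}^2\le C\|\Pi\|_{L^2}\|\Pi\|_{BMO}$ with $\|\Pi\|_{L^2}\le C(1+\|u\|_{L^4}^2)$ from Lemma~\ref{le2.6}. This produces $\frac{d}{dt}\|u\|_{L^4}^4\le C(1+\|\Pi\|_{BMO})(1+\|u\|_{L^4}^4)+\text{(PNP terms)}$, and only then does Lemma~\ref{le2.3} enter. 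The resulting scheme is also more layered than a single log-Gr\"onwall: one obtains $\|u(t)\|_{L^4}^4\le C_1(e+Y(t))^{2C\sigma}$ on $[T_0,T)$ with $\sigma$ small, feeds this into the $\dot H^1$ estimate (where the convection term is now handled by $\|u\|_{L^4}$, not by $\Pi$), and then into the $H^3\times H^2$ estimate, finally choosing $\sigma$ so small that the exponent $72C\sigma\le 1$ and a linear Gr\"onwall closes. Your treatment of the PNP coupling terms and of $\|\Pi\|_{H^2}\lesssim Y$ is in the right spirit, but the route to making $\Pi$ appear must go through the $L^4$ energy, not the $\dot H^1$ energy.
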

\begin{proof}
We first notice that the masses of $v$ and $w$ are conserved, i.e., for every $t\in[0,T)$, there hold:
\begin{equation*}
  \int_{\mathbb{R}^{3}}v(x,t)dx=\int_{\mathbb{R}^{3}}v_{0}(x)dx\ \ \ \ \text{and}\ \ \ \  \int_{\mathbb{R}^{3}}w(x,t)dx=\int_{\mathbb{R}^{3}}w_{0}(x)dx.
\end{equation*}
Moreover, by the maximum principle, we deduce that if $v_0$ and $w_0$  are non-negative, then $v$ and $w$ are also non-negative, see \cite{S09} for more details.
\vskip .1in

 \textbf{ Step 1:  Basic energy estimates} Testing the third equation of \eqref{eq1.1} by $v$, the fourth equation of \eqref{eq1.1} by $w$, summing up the resulting equalities, and using the fifth equation of \eqref{eq1.1} and the divergence free condition, one can easily obtain
\begin{align}\label{eq3.2}
  \frac{1}{2}\frac{d}{dt}\big(\|v\|_{L^{2}}^{2}+\|w\|_{L^{2}}^{2}\big)+\|\nabla
  v\|_{L^{2}}^{2}+\|\nabla
  w\|_{L^{2}}^{2}&+\int_{\mathbb{R}^{3}}(v+w)(v-w)^{2}
  dx=0.
\end{align}
Since $v$ and $w$ are non-negative, we infer from \eqref{eq3.2} that  for all $0\leq t<T$,
\begin{align}\label{eq3.3}
  \|v\|_{L^{2}}^{2}+\|w\|_{L^{2}}^{2}+2\int_{0}^{t}\big(\|\nabla
  v(\tau)\|_{L^{2}}^{2}+\|\nabla
  w(\tau)\|_{L^{2}}^{2}\big)d\tau\leq \|v_{0}\|_{L^{2}}^{2}+\|w_{0}\|_{L^{2}}^{2}.
\end{align}
On the other hand, multiplying
 the first equations of \eqref{eq1.1} by $u$,  the third and fourth equations of  \eqref{eq1.1} by
$\Psi$,  then integrating over $\mathbb{R}^{3}$, with the aid of the fifth equation of \eqref{eq1.1}, it can be easily seen
that
\begin{align}\label{eq3.4}
  &\frac{1}{2}\frac{d}{dt}\|u\|_{L^{2}}^{2}+\|\nabla
  u\|_{L^{2}}^{2}=\int_{\mathbb{R}^{3}}(v-w)u\cdot\nabla\Psi dx,\\
\label{eq3.5}
  &\int_{\mathbb{R}^{3}}\big(\partial_{t}v\Psi+\nabla\cdot(v\nabla\Psi)\Psi-\Delta
  v\Psi+(u\cdot\nabla)
  v\Psi\big)dx=0,\\
\label{eq3.6}
  &\int_{\mathbb{R}^{3}}\big(\partial_{t}w\Psi-\nabla\cdot(w\nabla\Psi)\Psi-\Delta
  w\Psi+(u\cdot\nabla)
  w\Psi\big)dx=0.
\end{align}
Subtracting \eqref{eq3.6} from \eqref{eq3.5} and using the fifth equation of \eqref{eq1.1} again, gives us to
\begin{equation}\label{eq3.7}
  \frac{1}{2}\frac{d}{dt}\|\nabla\Psi\|_{L^{2}}^{2}+\int_{\mathbb{R}^{3}}(v+w)|\nabla\Psi|^{2}dx+\int_{\mathbb{R}^{3}}|\Delta\Psi|^{2}dx
  +\int_{\mathbb{R}^{3}}(v-w)u\cdot\nabla\Psi dx
  =0.
\end{equation}
Adding \eqref{eq3.4} and \eqref{eq3.7} together, we find that
\begin{equation}\label{eq3.8}
   \frac{1}{2}\frac{d}{dt}(\|u\|_{L^{2}}^{2}+\|\nabla\Psi\|_{L^{2}}^{2})+\|\nabla
  u\|_{L^{2}}^{2}+\|\Delta\Psi\|_{L^{2}}^{2}
  +\int_{\mathbb{R}^{3}}(v+w)|\nabla\Psi|^{2}dx=0,
\end{equation}
which yields the $L^{2}$ bound of $u$ and $\nabla\Psi$ by observing the non-negativity of $v$ and $w$:
\begin{equation}\label{eq3.9}
  \|u\|_{L^{2}}^{2}+\|\nabla\Psi\|_{L^{2}}^{2}+2\int_{0}^{t}\big(\|\nabla
  u(\tau)\|_{L^{2}}^{2}+\|\Delta\Psi(\tau)\|_{L^{2}}^{2}\big)d\tau\leq
  C
\end{equation}
 for all $0\leq t<T$, where $C$ is a constant depending only on $\|u_{0}\|_{L^{2}}$ and $\|(v_{0},
w_{0})\|_{L^{1}\cap L^{2}}$.
\vskip .1in

\textbf{ Step 2:  $L^{4}$ estimate of the velocity}
Multiplying the first equations of \eqref{eq1.1} by $|u|^{2}u$ and integrating over $\mathbb{R}^{3}$, after suitable integration by parts, one has
\begin{align}\label{eq3.10}
  \frac{1}{4}\frac{d}{dt}\|u\|_{L^{4}}^{4}+\||u||\nabla u|\|_{L^{2}}^{2}+\frac{1}{2}\|\nabla
  |u|^{2}\|_{L^{2}}^{2}&\leq -\int_{\mathbb{R}^{3}}\nabla\Pi\cdot u|u|^{2}dx+\int_{\mathbb{R}^{3}}\Delta\Psi\nabla\Psi\cdot u|u|^{2}dx\nonumber\\
  &:=I_{1}+I_{2}.
\end{align}
Notice that, by choosing $q=2$ in \eqref{eq2.7} and applying \eqref{eq3.3} and \eqref{eq3.9},  one has
\begin{align*}
    \|\Pi\|_{L^{2}}&\leq C\big(\|u\|_{L^{4}}^{2}+\|\nabla\Psi\|_{L^{4}}^{2}\big)\leq C\big(\|u\|_{L^{4}}^{2}+\|\nabla\Psi\|_{L^{2}}^{\frac{1}{2}}\|\Delta\Psi\|_{L^{2}}^{\frac{3}{2}}\big)\nonumber\\
    &\leq C\big(1+\|u\|_{L^{4}}^{2}+\|(v,w)\|_{L^{2}}^{\frac{3}{2}}\big)\leq C\big(1+\|u\|_{L^{4}}^{2}\big),
\end{align*}
where we have used the
following interpolation inequality:
\begin{equation*}
  \|f\|_{L^{4}}\leq \|f\|_{L^{2}}^{\frac{1}{4}}\|
  \nabla f\|_{L^{2}}^{\frac{3}{4}}.
\end{equation*}
Thus, after integration by parts,  by applying Lemma \ref{le2.2} with $p=4$,  $I_{1}$ can be bounded by
\begin{align}\label{eq3.11}
  I_{1}&= \int_{\mathbb{R}^{3}}\Pi\nabla\cdot(u|u|^{2})dx\nonumber\\
  &\leq C  \int_{\mathbb{R}^{3}}|\Pi||u|^{2}|\nabla u|dx\nonumber\\
  &\leq C\|\Pi\|_{L^{4}}\|u\|_{L^{4}}\||u||\nabla u|\|_{L^{2}}\nonumber\\
  &\leq \frac{1}{2}\||u||\nabla u|\|_{L^{2}}^{2}+C\|\Pi\|_{L^{4}}^{2}\|u\|_{L^{4}}^{2}\nonumber\\
  &\leq  \frac{1}{2}\||u||\nabla u|\|_{L^{2}}^{2}+C\|\Pi\|_{L^{2}}\|\Pi\|_{BMO}\|u\|_{L^{4}}^{2}\nonumber\\
  &\leq  \frac{1}{2}\||u||\nabla u|\|_{L^{2}}^{2}+C\|\Pi\|_{BMO}\big(1+\|u\|_{L^{4}}^{4}\big).
\end{align}
For $I_{2}$,  by using  the fifth equation of \eqref{eq1.1} and the following Hardy-Littlewood-Sobolev inequality with $p=2$:
\begin{equation*}
  \|\nabla(-\Delta)^{-1}f\|_{L^{\frac{3p}{3-p}}}\leq
  C\|f\|_{L^{p}} \ \ \text{for any}\ \  1<p<3,
\end{equation*}
we obtain
\begin{align}\label{eq3.12}
  I_{2} &=\int_{\mathbb{R}^{3}}(v-w)\nabla\Psi\cdot u|u|^{2}dx\nonumber\\
  &\leq \|(v,w)\|_{L^{3}}\|\nabla\Psi\|_{L^{6}}\|u|u|^{2}\|_{L^{2}}\nonumber\\
  &\leq C\|(v,w)\|_{L^{2}}^{\frac{3}{2}}\|(\nabla v,\nabla w)\|_{L^{2}}^{\frac{1}{2}}\||u|^{2}\|_{L^{3}}^{\frac{3}{2}}\nonumber\\
  &\leq C\|(\nabla v,\nabla w)\|_{L^{2}}^{\frac{1}{2}}\||u|^{2}\|_{L^{2}}^{\frac{3}{4}}\|\nabla |u|^{2}\|_{L^{2}}^{\frac{3}{4}}\nonumber\\
  &\leq   \frac{1}{4}\|\nabla |u|^{2}\|_{L^{2}}^{2}+C\|(\nabla v,\nabla w)\|_{L^{2}}^{\frac{4}{5}}\|u\|_{L^{4}}^{\frac{12}{5}}\nonumber\\
  &\leq   \frac{1}{4}\|\nabla |u|^{2}\|_{L^{2}}^{2}+C\big(1+\|(\nabla v,\nabla w)\|_{L^{2}}^{2}\big)\big(1+\|u\|_{L^{4}}^{4}\big),
\end{align}
where we have used the interpolation inequality:
\begin{equation*}
  \|f\|_{L^{3}}\leq \|f\|_{L^{2}}^{\frac{1}{2}}\|
  \nabla f\|_{L^{2}}^{\frac{1}{2}}.
\end{equation*}
Since $H^{2}(\mathbb{R}^{3})\hookrightarrow L^{\infty}(\mathbb{R}^{3})\hookrightarrow BMO\hookrightarrow\dot{B}^{0}_{\infty,\infty}(\mathbb{R}^{3})$, we have
\begin{align*}
    \|\Pi\|_{\dot{B}^{0}_{\infty,\infty}}\leq \|\Pi\|_{BMO}\leq \|\Pi\|_{L^{\infty}}\leq C\|\Pi\|_{H^{2}}.
\end{align*}
Moreover, by \eqref{eq2.10}, \eqref{eq3.3} and \eqref{eq3.9}, one has
\begin{align*}
    \|\Pi\|_{H^{2}}&\leq C(\||u|^{2}\|_{H^{2}}+\||\nabla\Psi|^{2}\|_{H^{2}}\big)\nonumber\\
    &\leq C\big(\|u\|_{L^{\infty}}\|u\|_{H^{2}}+\|\nabla\Psi\|_{L^{\infty}}\|\nabla\Psi\|_{H^{2}}\big)\nonumber\\
    &\leq C\big(\|u\|_{H^{2}}^{2}+\|\nabla\Psi\|_{H^{2}}^{2}\big)\nonumber\\
    &\leq C\big(\|u\|_{H^{2}}^{2}+\|\nabla\Psi\|_{L^{2}}^{2}+\|\Delta\Psi\|_{L^{2}}^{2}+\|\nabla\Delta\Psi\|_{L^{2}}^{2}\big)\nonumber\\
    &\leq C\big(1+\|u\|_{H^{2}}^{2}+\|(v,w)\|_{H^{1}}^{2}\big)\nonumber\\
    &\leq C\big(1+\|\nabla\Delta u\|_{L^{2}}^{2}+\|(\Delta v,\Delta w)\|_{L^{2}}^{2}\big).
\end{align*}
Plugging \eqref{eq3.11} and \eqref{eq3.12} into \eqref{eq3.10},  and using the above facts and Lemma \ref{le2.3},  it follows that
\begin{align}\label{eq3.13}
  \frac{d}{dt}\|u\|_{L^{4}}^{4}&\leq C\Big(1+\|(\nabla v,\nabla w)\|_{L^{2}}^{2}+\|\Pi\|_{BMO}\Big)\Big(1+\|u\|_{L^{4}}^{4}\Big)\nonumber\\
  &\leq   C\Big(1+\|(\nabla v,\nabla w)\|_{L^{2}}^{2}+\|\Pi\|_{\dot{B}^{0}_{\infty,\infty}}\sqrt{1+\ln(e+\|\Pi\|_{H^{2}})}\Big)\Big(1+\|u\|_{L^{4}}^{4}\Big)\nonumber\\
  &\leq
  C\Big(1+\|(\nabla v,\nabla w)\|_{L^{2}}^{2}+\frac{\|\Pi\|_{\dot{B}^{0}_{\infty,\infty}}}{\sqrt{1+\ln(e+\|\Pi\|_{\dot{B}^{0}_{\infty,\infty}})}}
  \Big)\nonumber\\
  &\ \ \ \ \ \ \ \times\Big(1+\ln(e+\|\Pi\|_{H^{2}})\Big)\Big(1+\|u\|_{L^{4}}^{4}\Big)\nonumber\\
  &\leq
  C\Big(1+\|(\nabla v,\nabla w)\|_{L^{2}}^{2}+\frac{\|\Pi\|_{\dot{B}^{0}_{\infty,\infty}}}{\sqrt{1+\ln(e+\|\Pi\|_{\dot{B}^{0}_{\infty,\infty}})}}
  \Big)\nonumber\\
  &\ \ \ \ \ \ \ \times\ln\Big(e+\|\nabla\Delta u\|_{L^{2}}^{2}+\|(\Delta v, \Delta w)\|_{L^{2}}^{2}\Big)\Big(1+\|u\|_{L^{4}}^{4}\Big).
\end{align}
By the fact \eqref{eq3.3} and the condition \eqref{eq1.12}, one concludes that for any small
constant $\sigma>0$, there exists $T_{0}<T$ such that
\begin{equation*}
  \int_{T_{0}}^{T}\big(1+\|(\nabla v,\nabla w)\|_{L^{2}}^{2}\big)dt<\sigma
\end{equation*}
 and
\begin{equation*}
  \int_{T_{0}}^{T}\frac{\|\Pi\|_{\dot{B}^{0}_{\infty,\infty}}}{\sqrt{1+\ln(e+\|\Pi\|_{\dot{B}^{0}_{\infty,\infty}})}}dt<\sigma.
\end{equation*}
Setting
\begin{equation*}\label{eq3.14}
  Y(t):=\sup_{T_{0}\leq \tau\leq t}\big(\|\nabla\Delta
  u(\cdot,\tau)\|_{L^{2}}^{2}+\|(\Delta v(\cdot,\tau), \Delta w(\cdot,\tau))\|_{L^{2}}^{2}\big).
\end{equation*}
Then applying Gronwall's inequality to \eqref{eq3.13} in the time
interval $[T_{0},t]$ for any $T_{0}\leq t<T$, we conclude that
\begin{align}\label{eq3.14}
 \|u(t)\|_{L^{4}}^{4}&\leq C_{1}\exp\Big(\int_{T_{0}}^{t}C\big(1\!+\!\|(\nabla v,\nabla w)\|_{L^{2}}^{2}\!+\!\frac{\|\Pi\|_{\dot{B}^{0}_{\infty,\infty}}}{\sqrt{1+\ln(e+\|\Pi\|_{\dot{B}^{0}_{\infty,\infty}})}}\big)d\tau\ln(e\!+\!Y(t))
  \Big)\nonumber\\
  &\leq C_{1}\exp\big(2C\sigma\ln(e+Y(t))\big)\nonumber\\
  &\leq C_{1}(e+Y(t))^{2C\sigma},
\end{align}
where $C_{1}=1+\|u(\cdot,T_{0})\|_{L^{4}}^{4}$.
\vskip .1in

\textbf{ Step 3:  $L^{2}$ estimate for the gradient of velocity  field}
Multiplying  the first equations of \eqref{eq1.1}  by $-\Delta u$, and integrating over $\mathbb{R}^{3}$, one has
\begin{align}\label{eq3.15}
  \frac{1}{2}\frac{d}{dt}\|\nabla u\|_{L^{2}}^{2}+\|\Delta u\|_{L^{2}}^{2}&=\int_{\mathbb{R}^{3}}(u\cdot\nabla) u\cdot \Delta u dx-\int_{\mathbb{R}^{3}}\Delta\Psi\nabla \Psi\cdot\Delta u dx\nonumber\\
  &:=J_{1}+J_{2}.
\end{align}
By applying Young's inequality, $J_{1}$ can be estimated as follows:
\begin{align}\label{eq3.16}
   J_{1}&\leq
  \|u\|_{L^{4}}\|\nabla u\|_{L^{4}}\|\Delta u\|_{L^{2}}\nonumber\\
  &\leq \frac{1}{8}\|\Delta u\|_{L^{2}}^{2}+C \|u\|_{L^{4}}^{2}\|\nabla u\|_{L^{4}}^{2}\nonumber\\
  &\leq \frac{1}{8}\|\Delta u\|_{L^{2}}^{2}+C \|u\|_{L^{4}}^{2}\|u\|_{L^{2}}^{\frac{1}{4}}\|\Delta u\|_{L^{2}}^{\frac{7}{4}}
 \nonumber\\
  &\leq \frac{1}{4}\|\Delta u\|_{L^{2}}^{2}+C \|u\|_{L^{4}}^{16},
\end{align}
where we have used the
 interpolation inequality:
\begin{equation*}
  \|\nabla f\|_{L^{4}}\leq \|f\|_{L^{2}}^{\frac{1}{8}}\|
  \Delta f\|_{L^{2}}^{\frac{7}{8}}.
\end{equation*}
Moreover, applying the fifth equation of \eqref{eq1.1},  \eqref{eq3.3} and \eqref{eq3.9},  we bound $J_{2}$ as
\begin{align}\label{eq3.17}
  J_{2}  &\leq C \|\nabla \Psi\|_{L^{4}}\|\Delta \Psi\|_{L^{4}}\|\Delta u\|_{L^{2}}\nonumber\\
  &\leq \frac{1}{4}\|\Delta u\|_{L^{2}}^{2}+C\|(v,w)\|_{L^{4}}^{2}\|\nabla \Psi\|_{L^{4}}^{2}\nonumber\\
  &\leq\frac{1}{4}\|\Delta u\|_{L^{2}}^{2}+C\|(v,w)\|_{L^{2}}^{\frac{1}{2}}\|(\nabla v,\nabla w)\|_{L^{2}}^{\frac{3}{2}}\|\nabla\Psi\|_{L^{2}}^{\frac{1}{2}}\|\Delta\Psi\|_{L^{2}}^{\frac{3}{2}}\nonumber\\
  &\leq \frac{1}{4}\|\Delta u\|_{L^{2}}^{2}+C\|(v,w)\|_{L^{2}}^{2}\|(\nabla v,\nabla w)\|_{L^{2}}^{\frac{3}{2}}\|\nabla\Psi\|_{L^{2}}^{\frac{1}{2}}\nonumber\\
  &\leq \frac{1}{4}\|\Delta u\|_{L^{2}}^{2}+C\|(v,w)\|_{L^{2}}^{2}\|(\nabla v,\nabla w)\|_{L^{2}}^{2}+C\|(v,w)\|_{L^{2}}^{2}\|\nabla\Psi\|_{L^{2}}^{2}\nonumber\\
   &\leq \frac{1}{4}\|\Delta u\|_{L^{2}}^{2}+C(\|(\nabla v,\nabla w)\|_{L^{2}}^{2}+1).
\end{align}
Plugging \eqref{eq3.16} and \eqref{eq3.17} into \eqref{eq3.15}, we
obtain
\begin{equation}\label{eq3.18}
  \frac{d}{dt}\|\nabla u\|_{L^{2}}^{2}+\|\Delta u\|_{L^{2}}^{2}\leq
  C(\|u\|_{L^{4}}^{16}+\|(\nabla v,\nabla w)\|_{L^{2}}^{2}+1).
\end{equation}
Integrating \eqref{eq3.18} on  the time interval $[T_{0},t]$ and using \eqref{eq3.3} and \eqref{eq3.14},  it follows that
\begin{align}\label{eq3.19}
  \|\nabla u(t)\|_{L^{2}}^{2}&+\int_{T_{0}}^{t}\|\Delta u(\tau)\|_{L^{2}}^{2}d\tau
  \leq C\int_{T_{0}}^{t}\big(\|u\|_{L^{4}}^{16}+\|(\nabla v,\nabla w)\|_{L^{2}}^{2}+1\big)d\tau+\|\nabla u(T_{0})\|_{L^{2}}^{2}\nonumber\\
  &\leq
  C+C(1+C_{1}^{4}(e+Y(t))^{8C\sigma})(t-T_{0})+\|\nabla u(T_{0})\|_{L^{2}}^{2}\nonumber\\
  &\leq C(1+C_{1}^{4}(e+Y(t))^{8C\sigma}),
\end{align}
where $C$ is a constant depending only on $\|u_{0}\|_{H^{1}}$, $\|(v_{0},
w_{0})\|_{L^{1}\cap L^{2}}$ and $T$.
\vskip .1in

\textbf{ Step 4:  Higher-order derivatives estimates}
Taking $\nabla\Delta$ on the first equations of \eqref{eq1.1}, multiplying the
resulting equality with $\nabla\Delta u$ and integrating over $\mathbb{R}^{3}$, observing
that the pressure $\Pi$ can be eliminated by the incompressible condition
$\nabla\cdot u=0$, one obtains that
\begin{align}\label{eq3.20}
  \frac{1}{2}\frac{d}{dt}\|\nabla\Delta
  u\|_{L^{2}}^{2}+\|\Delta^{2}u\|_{L^{2}}^{2}&=-\int_{\mathbb{R}^{3}}\nabla\Delta((u\cdot\nabla)
  u)\cdot\nabla\Delta u dx+\int_{\mathbb{R}^{3}}\nabla\Delta(\Delta\Psi\nabla\Psi)\cdot\nabla\Delta u dx\nonumber\\
  &:=K_{1}+K_{2}.
\end{align}
Since $\nabla\cdot u=0$, it follows from Lemma \ref{le2.4} that
\begin{align}\label{eq3.21}
  K_{1}&=-\int_{\mathbb{R}^{3}}\big[\nabla\Delta((u\cdot\nabla)
  u)-(u\cdot\nabla)\nabla\Delta u\big]\cdot\nabla\Delta u dx\nonumber\\
  &\leq C\|\nabla\Delta((u\cdot\nabla)
  u)-(u\cdot\nabla)\nabla\Delta u\|_{L^{\frac{4}{3}}}\|\nabla\Delta
  u\|_{L^{4}}\nonumber\\
  &\leq C\|\nabla u\|_{L^{2}}\|\nabla\Delta
  u\|_{L^{4}}^{2}\nonumber\\
  &\leq C\|\nabla u\|_{L^{2}}^{\frac{7}{6}}\|\Delta^{2}
  u\|_{L^{2}}^{\frac{11}{6}}\nonumber\\
  &\leq \frac{1}{4}\|\Delta^{2}
  u\|_{L^{2}}^{2} +C\|\nabla u\|_{L^{2}}^{14},
\end{align}
where we have used the interpolation inequality:
\begin{equation*}
    \|\nabla\Delta u\|_{L^{4}}\leq C\|\nabla u\|_{L^{2}}^{\frac{1}{12}}\|\Delta^{2}
  u\|_{L^{2}}^{\frac{11}{12}}.
\end{equation*}
For $K_{2}$, applying the fifth equation of \eqref{eq1.1}, \eqref{eq3.3} and \eqref{eq3.9}, it follows from the Leibniz's law that
\begin{align}\label{eq3.22}
  &K_{2}=-\int_{\mathbb{R}^{3}}\Delta((v-w)\nabla\Psi)\cdot\Delta^{2}u dx\nonumber\\
  &\leq \frac{1}{4}\|\Delta^{2}u\|_{L^{2}}^{2}+C\|\Delta((v-w)\nabla\Psi)\|_{L^{2}}^{2}\nonumber\\
  &\leq \frac{1}{4}\|\Delta^{2}u\|_{L^{2}}^{2}+C\big(\|(\Delta v-\Delta w)\nabla\Psi\|_{L^{2}}^{2}+2\|(\nabla v-\nabla w)\nabla^{2}\Psi\|_{L^{2}}^{2}+\|(v-w)\nabla\Delta\Psi\|_{L^{2}}^{2}\big)\nonumber\\
  &\leq \frac{1}{4}\|\Delta^{2}u\|_{L^{2}}^{2}+C\big(\|(\Delta v, \Delta w)\|_{L^{3}}^{2}\|\nabla\Psi\|_{L^{6}}^{2}+\|(v,w)\|_{L^{3}}^{2}\|(\nabla v,\nabla w)\|_{L^{6}}^{2}\big)\nonumber\\
  &\leq \frac{1}{4}\|\Delta^{2}u\|_{L^{2}}^{2}+C\big(\|(\Delta v, \Delta w)\|_{L^{2}}\|(\nabla\Delta v, \nabla\Delta w)\|_{L^{2}}+\|(\nabla v,\nabla w)\|_{L^{2}}\|(\Delta v, \Delta w)\|_{L^{2}}^{2}\big)\nonumber\\
  &\leq \frac{1}{4}\|\Delta^{2}u\|_{L^{2}}^{2}+\frac{1}{6}\|(\nabla\Delta v, \nabla\Delta w)\|_{L^{2}}^{2}+C\big(1+\|(\nabla v, \nabla w)\|_{L^{2}}^{2}\big)\|(\Delta v, \Delta w)\|_{L^{2}}^{2}.
\end{align}
Putting \eqref{eq3.21} and \eqref{eq3.22} together, we deduce that
\begin{align}\label{eq3.23}
  \frac{d}{dt}\|\nabla\Delta
  u\|_{L^{2}}^{2}&+\|\Delta^{2}u\|_{L^{2}}^{2}\leq \frac{1}{3}\|(\nabla\Delta v, \nabla\Delta w)\|_{L^{2}}^{2}\nonumber\\
  &+C\|\nabla u\|_{L^{2}}^{14}+C\big(1+\|(\nabla v, \nabla w)\|_{L^{2}}^{2}\big)\|(\Delta v, \Delta w)\|_{L^{2}}^{2}.
\end{align}
Taking $\Delta$ to the third equation of \eqref{eq1.1}, then multiplying $\Delta v$, after integration by parts, we see that
\begin{align}\label{eq3.24}
  \frac{1}{2}\frac{d}{dt}\|\Delta
  v\|_{L^{2}}^{2}+\|\nabla\Delta v\|_{L^{2}}^{2}&=-\int_{\mathbb{R}^{3}}\Delta((u\cdot\nabla)
  v)\Delta v dx-\int_{\mathbb{R}^{3}}\Delta\nabla\cdot(v\nabla\Psi)\Delta v dx\nonumber\\
  &:=L_{1}+L_{2}.
\end{align}
Applying \eqref{eq3.3} and \eqref{eq3.9},  $L_{1}$ and $L_{2}$ can be bounded by
\begin{align*}
  L_{1}&=\int_{\mathbb{R}^3}\nabla((u\cdot\nabla) v)\cdot\nabla\Delta vdx\nonumber\\
  &\leq \frac{1}{8}\|\nabla\Delta v\|_{L^{2}}^{2}+C\big(\|(\nabla u\cdot\nabla)v\|_{L^{2}}^{2}+\|(u\cdot\nabla)\nabla v\|_{L^{2}}^{2}\big)\nonumber\\
  &\leq \frac{1}{8}\|\nabla\Delta v\|_{L^{2}}^{2}+C\big(\|\nabla u\|_{L^{\infty}}^{2}\|\nabla v\|_{L^{2}}^{2}+\|u\|_{L^{4}}^{2}\|\Delta v\|_{L^{4}}^{2}\big)\nonumber\\
  &\leq \frac{1}{8}\|\nabla\Delta v\|_{L^{2}}^{2}+C\big(\|\nabla v\|_{L^{2}}^{2}(1+\|\nabla \Delta u\|_{L^{2}}^{2})+\|u\|_{L^{2}}^{\frac{1}{2}}\|\nabla u\|_{L^{2}}^{\frac{3}{2}}\| v\|_{L^{2}}^{\frac{1}{6}}\|\nabla\Delta v\|_{L^{2}}^{\frac{11}{6}}\big)\nonumber\\
  &\leq \frac{1}{6}\|\nabla\Delta v\|_{L^{2}}^{2}+C\|\nabla v\|_{L^{2}}^{2}\big(1+\|\nabla \Delta u\|_{L^{2}}^{2}\big)+C\|\nabla u\|_{L^{2}}^{18};
\end{align*}

\begin{align*}
  L_{2}&=\int_{\mathbb{R}^3}\Delta(v\nabla \Psi)\cdot\nabla\Delta vdx\nonumber\\
  &\leq \frac{1}{8}\|\nabla\Delta v\|_{L^{2}}^{2}+C\big(\|\Delta v\nabla\Psi\|_{L^{2}}^{2}+\|\nabla v\nabla^{2}\Psi\|_{L^{2}}^{2}+\|v\nabla \Delta\Psi\|_{L^{2}}^{2}\big)\nonumber\\
  &\leq \frac{1}{8}\|\nabla\Delta v\|_{L^{2}}^{2}+C\big(\|\Delta v\|_{L^{3}}^{2}\|\nabla \Psi\|_{L^{6}}^{2}+\|\nabla v\|_{L^{6}}^{2}\|\nabla^{2} \Psi\|_{L^{3}}^{2}+\|v\|_{L^{3}}^{2}\|\nabla(v-w)\|_{L^{6}}^{2}\big)\nonumber\\
  &\leq \frac{1}{6}\|\nabla\Delta v\|_{L^{2}}^{2}+C\big(1+\|(\nabla v, \nabla w)\|_{L^{2}}^{2}\big)\big(1+\|(\Delta v, \Delta w)\|_{L^{2}}^{2}\big).
\end{align*}
Taking the above two inequalities into \eqref{eq3.24} gives us to
\begin{align}\label{eq3.25}
  \frac{d}{dt}\|\Delta
  v\|_{L^{2}}^{2}&+\frac{4}{3}\|\nabla\Delta v\|_{L^{2}}^{2}\leq C\|\nabla u\|_{L^{2}}^{18}\nonumber\\
  &+C\big(1+\|(\nabla v, \nabla w)\|_{L^{2}}^{2}\big)\big(1+\|\nabla \Delta u\|_{L^{2}}^{2}+\|(\Delta v, \Delta w)\|_{L^{2}}^{2}\big).
\end{align}
The bound of $w$  can be derived analogously, thus we get
\begin{align}\label{eq3.26}
  \frac{d}{dt}\|\Delta
  w\|_{L^{2}}^{2}&+\frac{4}{3}\|\nabla\Delta w\|_{L^{2}}^{2}\leq C\|\nabla u\|_{L^{2}}^{18}\nonumber\\
  &+C\big(1+\|(\nabla v, \nabla w)\|_{L^{2}}^{2}\big)\big(1+\|\nabla \Delta u\|_{L^{2}}^{2}+\|(\Delta v, \Delta w)\|_{L^{2}}^{2}\big).
\end{align}
Now we  complete the proof of Lemma \ref{le3.1}.  We conclude from
 \eqref{eq3.14},  \eqref{eq3.19}, \eqref{eq3.23}, \eqref{eq3.25} and \eqref{eq3.26} that
\begin{align}\label{eq3.27}
  \frac{d(e+Y(t))}{dt}&\leq C\big(\|\nabla u\|_{L^{2}}^{14}+\|\nabla u\|_{L^{2}}^{18}\big)+C\big(1+\|(\nabla v, \nabla w)\|_{L^{2}}^{2}\big)(e+Y(t))\nonumber\\
  &\leq C(1+C_{1}^{36}(e+Y(t))^{72C\sigma})+C\big(1+\|(\nabla v, \nabla w)\|_{L^{2}}^{2}\big)(e+Y(t)).
\end{align}
Choosing $\sigma$ small enough such that $72C\sigma\leq 1$, the above inequality \eqref{eq3.27} yields to
\begin{align}\label{eq3.28}
  \frac{d}{dt}(e+Y(t))\leq C\big(1+\|(\nabla v, \nabla w)\|_{L^{2}}^{2}\big)(e+Y(t)).
\end{align}
For any $T_{0}\leq t<T$, applying Gronwall's inequality to \eqref{eq3.28},   we get
\begin{align*}
  Y(t)\leq (e+Y(T_{0}))\exp\Big(C\int_{T_{0}}^{t}\big(1+\|(\nabla v, \nabla w)\|_{L^{2}}^{2}\big)d\tau
  \Big),
\end{align*}
where $Y(T_{0})=e+\|\nabla\Delta u(\cdot,T_{0})\|_{L^{2}}^{2}+\|(\Delta v(\cdot,T_{0}),\Delta w(\cdot,T_{0}))\|_{L^{2}}^{2}$. This together with the basic energy estimates \eqref{eq3.3} and \eqref{eq3.9} yield
that
\begin{equation*}
  \sup_{0\leq t<T}\big(\|u(\cdot,t)\|_{H^{3}}+\|(v(\cdot,t),w(\cdot,t))\|_{H^{2}}\big)\leq
  C_0,
\end{equation*}
where $C_{0}$ is a constant depending only on $\|u_0\|_{H^{3}}$,
$\|(v_0,w_0)\|_{L^{1}\cap H^{2}}$ and $T$.
We complete the proof of Lemma \ref{le3.1}.
\end{proof}

Similarly, under the condition \eqref{eq1.13}, we have the following a priori estimate of local smooth solutions.
\begin{lemma}\label{le3.2}
Under the hypotheses of Lemma \ref{le3.1} if we replace the condition \eqref{eq1.12} by \eqref{eq1.13}, we still have the desired bound \eqref{eq3.1}.
\end{lemma}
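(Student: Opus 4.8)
The plan is to run the same four–step scheme as in the proof of Lemma \ref{le3.1}, noting that Steps 1, 3 and 4 there never use the regularity hypothesis at all: Step 1 is the pure energy balance, and in Steps 3 and 4 the pressure is eliminated by the divergence–free constraint $\nabla\cdot u=0$. Hence those steps transfer verbatim, and the only modification is in Step 2, the $L^4$–estimate of the velocity, where the appeal to Lemma \ref{le2.2} must be replaced by the interpolation inequality \eqref{eq2.6} together with the pressure identity \eqref{eq2.8}.

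More precisely, I would start again from \eqref{eq3.10}, integrate $I_{1}$ by parts to get $I_{1}\le C\|\Pi\|_{L^{4}}\|u\|_{L^{4}}\||u||\nabla u|\|_{L^{2}}$, and then, instead of $\|\Pi\|_{L^{4}}^{2}\le C\|\Pi\|_{L^{2}}\|\Pi\|_{BMO}$, use \eqref{eq2.6} followed by \eqref{eq2.8} with $q=2$ to write $\|\Pi\|_{L^{4}}^{2}\le C\|\Pi\|_{\dot{B}^{-1}_{\infty,\infty}}\|\nabla\Pi\|_{L^{2}}\le C\|\Pi\|_{\dot{B}^{-1}_{\infty,\infty}}\big(\||u||\nabla u|\|_{L^{2}}+\||\nabla\Psi||\Delta\Psi|\|_{L^{2}}\big)$. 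The electric term is harmless: by Hölder, Gagliardo--Nirenberg, the Hardy--Littlewood--Sobolev inequality and the basic bounds \eqref{eq3.3}, \eqref{eq3.9} (in particular $\|\Delta\Psi\|_{L^{2}}=\|v-w\|_{L^{2}}\le C$) one gets $\||\nabla\Psi||\Delta\Psi|\|_{L^{2}}^{2}\le C\big(1+\|(\nabla v,\nabla w)\|_{L^{2}}^{2}\big)$. Plugging this into $I_{1}$, applying Young's inequality to absorb the surplus powers of $\||u||\nabla u|\|_{L^{2}}$ into the dissipative terms $\||u||\nabla u|\|_{L^{2}}^{2}$ and $\tfrac12\|\nabla|u|^{2}\|_{L^{2}}^{2}$ on the left of \eqref{eq3.10}, and combining with the bound for $I_{2}$ already obtained in Lemma \ref{le3.1}, one reaches
\[
\frac{d}{dt}\|u\|_{L^{4}}^{4}\le C\big(\|\Pi\|_{\dot{B}^{-1}_{\infty,\infty}}^{2}+1+\|(\nabla v,\nabla w)\|_{L^{2}}^{2}\big)\big(1+\|u\|_{L^{4}}^{4}\big).
\]

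Next I would recover the logarithmic factor exactly as in \eqref{eq3.13}: split
\[
\|\Pi\|_{\dot{B}^{-1}_{\infty,\infty}}^{2}=\frac{\|\Pi\|_{\dot{B}^{-1}_{\infty,\infty}}^{2}}{1+\ln(e+\|\Pi\|_{\dot{B}^{-1}_{\infty,\infty}})}\,\big(1+\ln(e+\|\Pi\|_{\dot{B}^{-1}_{\infty,\infty}})\big),
\]
and use the embedding $\|\Pi\|_{\dot{B}^{-1}_{\infty,\infty}}\le C\|\Pi\|_{H^{2}}$ together with the estimate $\|\Pi\|_{H^{2}}\le C\big(1+\|\nabla\Delta u\|_{L^{2}}^{2}+\|(\Delta v,\Delta w)\|_{L^{2}}^{2}\big)$ derived in the proof of Lemma \ref{le3.1} to get $1+\ln(e+\|\Pi\|_{\dot{B}^{-1}_{\infty,\infty}})\le C\ln(e+Y(t))$, with $Y$ as defined there. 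This turns the previous differential inequality into the exact analogue of \eqref{eq3.13} in which the coefficient $\tfrac{\|\Pi\|_{\dot{B}^{0}_{\infty,\infty}}}{\sqrt{1+\ln(e+\|\Pi\|_{\dot{B}^{0}_{\infty,\infty}})}}$ is replaced by $\tfrac{\|\Pi\|_{\dot{B}^{-1}_{\infty,\infty}}^{2}}{1+\ln(e+\|\Pi\|_{\dot{B}^{-1}_{\infty,\infty}})}$, which is precisely the quantity assumed integrable in \eqref{eq1.13}. From here the remainder of the proof of Lemma \ref{le3.1} applies word for word: using \eqref{eq3.3} and \eqref{eq1.13} fix $T_{0}<T$ so that the time integral over $[T_{0},T]$ of $1+\|(\nabla v,\nabla w)\|_{L^{2}}^{2}$ plus the pressure coefficient is smaller than a prescribed small $\sigma$, apply Gronwall to obtain $\|u(t)\|_{L^{4}}^{4}\le C_{1}(e+Y(t))^{2C\sigma}$, invoke the unchanged Steps 3 and 4 to derive \eqref{eq3.27}, and choose $\sigma$ small to close the bound and conclude \eqref{eq3.1}.

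The only genuinely new ingredient, and the step I expect to require the most care, is the chain $\|\Pi\|_{L^{4}}^{2}\le C\|\Pi\|_{\dot{B}^{-1}_{\infty,\infty}}\|\nabla\Pi\|_{L^{2}}\le C\|\Pi\|_{\dot{B}^{-1}_{\infty,\infty}}\big(\||u||\nabla u|\|_{L^{2}}+\||\nabla\Psi||\Delta\Psi|\|_{L^{2}}\big)$ together with the bookkeeping needed to absorb the extra dissipation into the left-hand side of \eqref{eq3.10} while keeping only an $L^{1}_{t}$ coefficient in front of $1+\|u\|_{L^{4}}^{4}$; everything else is a direct transcription of Lemma \ref{le3.1}.
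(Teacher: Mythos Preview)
Your proposal is correct and follows essentially the same route as the paper: both replace the $BMO$ interpolation in the $I_1$-estimate by \eqref{eq2.6}, bound $\|\nabla\Pi\|_{L^{2}}$ via \eqref{eq2.8} (the paper records this as \eqref{eq3.29}), control the electric contribution by $C\|(\nabla v,\nabla w)\|_{L^{2}}^{1/2}$, and then use the embedding $H^{2}\hookrightarrow\dot B^{-1}_{\infty,\infty}$ together with the $\|\Pi\|_{H^{2}}$ bound from Lemma~\ref{le3.1} to insert the logarithmic factor before invoking Gronwall and the unchanged Steps~1,~3,~4.
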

\begin{proof}
Since Lemma \ref{le3.2} can be proven analogously to Lemma \ref{le3.1},  we sketch its proof  here.
Notice that Riesz operators are bounded in $L^{p}(\mathbb{R}^{3})$ for all $1<p<\infty$, it follows from Lemma \ref{le2.6} that
\begin{align}\label{eq3.29}
\|\nabla\Pi\|_{L^{2}} &\leq C\big(\||u||\nabla u|\|_{L^{2}}+\||\nabla\Psi||\Delta\Psi|\|_{L^{2}}\big)\nonumber\\
&\leq C\big(\||u||\nabla u|\|_{L^{2}}+\|\nabla\Psi\|_{L^{6}}\|(v,w)\|_{L^{3}}\big)\nonumber\\
&\leq C\big(\||u||\nabla u|\|_{L^{2}}+\|(v, w)\|_{L^{2}}^{\frac{3}{2}}\|(\nabla v,\nabla w)\|_{L^{2}}^{\frac{1}{2}}\big)\nonumber\\
&\leq C\big(\||u||\nabla u|\|_{L^{2}}+\|(\nabla v,\nabla w)\|_{L^{2}}^{\frac{1}{2}}\big),
\end{align}
which leads to the bound of $I_{1}$  by \eqref{eq2.6},
\begin{align}\label{eq3.30}
  I_{1} &\leq \frac{1}{4}\||u||\nabla u|\|_{L^{2}}^{2}+C\|\Pi\|_{L^{4}}^{2}\|u\|_{L^{4}}^{2}\nonumber\\
  &\leq  \frac{1}{4}\||u||\nabla u|\|_{L^{2}}^{2}+C\|\Pi\|_{\dot{B}^{-1}_{\infty,\infty}}\|\nabla\Pi\|_{L^{2}}\|u\|_{L^{4}}^{2}\nonumber\\
  &\leq  \frac{1}{4}\||u||\nabla u|\|_{L^{2}}^{2}+C\|\Pi\|_{\dot{B}^{-1}_{\infty,\infty}}\big(\||u||\nabla u|\|_{L^{2}}+\|(\nabla v,\nabla w)\|_{L^{2}}^{\frac{1}{2}}\big)\|u\|_{L^{4}}^{2}\nonumber\\
  &\leq  \frac{1}{2}\||u||\nabla u|\|_{L^{2}}^{2}+C\big(1+\|(\nabla v,\nabla w)\|_{L^{2}}^{2}+\|\Pi\|_{\dot{B}^{-1}_{\infty,\infty}}^{2}\big)\big(1+\|u\|_{L^{4}}^{4}\big).
\end{align}
Since $H^{2}(\mathbb{R}^{3})\hookrightarrow L^{3}(\mathbb{R}^{3})\hookrightarrow\dot{B}^{-1}_{\infty,\infty}(\mathbb{R}^{3})$, we have
\begin{align*}
    \|\Pi\|_{\dot{B}^{-1}_{\infty,\infty}}\leq  C\|\Pi\|_{L^{3}}\leq C\|\Pi\|_{H^{2}}.
\end{align*}
Hence, combining  \eqref{eq3.30} with \eqref{eq3.12},  it follows that
\begin{align}\label{eq3.31}
  &\frac{d}{dt}\|u\|_{L^{4}}^{4}\leq C\Big(1+\|(\nabla v,\nabla w)\|_{L^{2}}^{2}+\|\Pi\|_{\dot{B}^{-1}_{\infty,\infty}}^{2}\Big)\Big(1+\|u\|_{L^{4}}^{4}\Big)\nonumber\\
  &\leq
  C\Big(1+\|(\nabla v,\nabla w)\|_{L^{2}}^{2}+\frac{\|\Pi\|_{\dot{B}^{-1}_{\infty,\infty}}^{2}}{1+\ln(e+\|\Pi\|_{\dot{B}^{-1}_{\infty,\infty}})}
  \Big)\Big(1+\ln(e+\|\Pi\|_{\dot{B}^{-1}_{\infty,\infty}})\Big)\Big(1+\|u\|_{L^{4}}^{4}\Big) \nonumber\\
  &\leq
  C\Big(1+\|(\nabla v,\nabla w)\|_{L^{2}}^{2}+\frac{\|\Pi\|_{\dot{B}^{-1}_{\infty,\infty}}^{2}}{1+\ln(e+\|\Pi\|_{\dot{B}^{-1}_{\infty,\infty}})}
  \Big)\nonumber\\
  &\ \ \ \ \ \ \ \ \times\ln\Big(e+\|\nabla\Delta u\|_{L^{2}}^{2}+\|(\Delta v, \Delta w)\|_{L^{2}}^{2}\Big)\Big(1+\|u\|_{L^{4}}^{4}\Big).
\end{align}
Under the condition \eqref{eq1.13}, one concludes that for any small
constant $\sigma>0$, there exists $T_{0}<T$ such that
\begin{equation*}
  \int_{T_{0}}^{T}\frac{\|\Pi\|_{\dot{B}^{-1}_{\infty,\infty}}^{2}}{1+\ln(e+\|\Pi\|_{\dot{B}^{-1}_{\infty,\infty}})}dt<\sigma.
\end{equation*}
Applying Gronwall's inequality to \eqref{eq3.31} in the time
interval $[T_{0},t]$ for any $T_{0}\leq t< T$, we obtain
\begin{align}\label{eq3.32}
   \|u(t)\|_{L^{4}}^{4}&\leq C_{1}\exp\Big(C\int_{T_{0}}^{t}\big(1+\|(\nabla v,\nabla w)\|_{L^{2}}^{2}+\frac{\|\Pi\|_{\dot{B}^{-1}_{\infty,\infty}}^{2}}{1+\ln(e+\|\Pi\|_{\dot{B}^{0}_{\infty,\infty}})}\big)d\tau\ln(e+Y(t))
   \Big)\nonumber\\
   &\leq C_{1}\exp\big(2C\sigma\ln(e+Y(t))\big)\nonumber\\
   &\leq C_{1}(e+Y(t))^{2C\sigma}.
\end{align}

\vskip.1in

Having obtained \eqref{eq3.32}, we can follow a similar argument
as that in the proof of Lemma \ref{le3.1} to get the desired assertion \eqref{eq3.1},
thus we safely omit the proof here. This completes the proof of Lemma
\ref{le3.2}.
\end{proof}

Furthermore, under the condition \eqref{eq1.14}, we have the following a priori estimate of local smooth solutions.

\begin{lemma}\label{le3.3}
Under the hypotheses of Lemma \ref{le3.1} if we replace the condition \eqref{eq1.12} by \eqref{eq1.14}, we still have the desired bound \eqref{eq3.1}.
\end{lemma}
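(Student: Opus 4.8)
The plan is to follow the proof of Lemma \ref{le3.1} essentially line by line, the only change being the treatment of the pressure term $I_{1}$ in the $L^{4}$-energy estimate \eqref{eq3.10} of the velocity, which must now be arranged so as to invoke the hypothesis \eqref{eq1.14} on $\nabla\Pi$ rather than on $\Pi$. First I would \emph{not} integrate by parts in $I_{1}$ (that would bring back $\Pi$, as in Lemma \ref{le3.1}); instead I would estimate directly
\[
  I_{1}=-\int_{\mathbb{R}^{3}}\nabla\Pi\cdot u|u|^{2}\,dx\le\int_{\mathbb{R}^{3}}|\nabla\Pi|\,|u|^{3}\,dx\le\|\nabla\Pi\|_{L^{4}}\|u\|_{L^{4}}^{3}.
\]
The core of the argument is then the control of $\|\nabla\Pi\|_{L^{4}}$: applying Lemma \ref{le2.2} componentwise gives $\|\nabla\Pi\|_{L^{4}}^{2}\le C\|\nabla\Pi\|_{L^{2}}\|\nabla\Pi\|_{BMO}$; the $L^{2}$ factor is handled by the pressure--velocity relation \eqref{eq2.8} of Lemma \ref{le2.6}, used exactly in the form \eqref{eq3.29}, i.e.\ $\|\nabla\Pi\|_{L^{2}}\le C(\||u||\nabla u|\|_{L^{2}}+\|(\nabla v,\nabla w)\|_{L^{2}}^{1/2})$; and the $BMO$ factor is handled by the logarithmic inequality of Lemma \ref{le2.3} applied to $f=\nabla\Pi$ (legitimate since the local solution is smooth, hence $\nabla\Pi\in H^{2}$), $\|\nabla\Pi\|_{BMO}\le C(1+\|\nabla\Pi\|_{\dot{B}^{0}_{\infty,\infty}}\ln^{1/2}(e+\|\nabla\Pi\|_{H^{2}}))$.

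Combining these, $I_{1}\le C(\||u||\nabla u|\|_{L^{2}}^{1/2}+\|(\nabla v,\nabla w)\|_{L^{2}}^{1/4})\|\nabla\Pi\|_{BMO}^{1/2}\|u\|_{L^{4}}^{3}$, and one application of Young's inequality with exponents $(4,\frac{4}{3})$ on the first piece (splitting off a small multiple of $\||u||\nabla u|\|_{L^{2}}^{2}$, which is absorbed by the dissipative terms in \eqref{eq3.10}) together with one application with exponents $(8,\frac{8}{7})$ on the second (splitting off a small multiple of $\|(\nabla v,\nabla w)\|_{L^{2}}^{2}$, which lies in $L^{1}_{t}$ by \eqref{eq3.3}) leaves exactly the power $\|\nabla\Pi\|_{BMO}^{2/3}$ — and a strictly lower power of $\|\nabla\Pi\|_{BMO}$ from the charge-dependent piece, which is harmless — multiplying $1+\|u\|_{L^{4}}^{4}$. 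It is this pairing of the exponent $\frac{1}{2}$ coming from $\|\nabla\Pi\|_{L^{4}}=(\|\nabla\Pi\|_{L^{4}}^{2})^{1/2}$ with the Young exponent $\frac{4}{3}$ that forces the power $\frac{2}{3}$ appearing in \eqref{eq1.14}.

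Treating $I_{2}$ exactly as in \eqref{eq3.12}, plugging into \eqref{eq3.10}, and using that $\Pi=\sum_{i,j}\mathcal{R}_{i}\mathcal{R}_{j}(u_{i}u_{j}-\partial_{x_{i}}\Psi\partial_{x_{j}}\Psi)-\frac{1}{2}|\nabla\Psi|^{2}$ together with $\Delta\Psi=v-w$, the Sobolev algebra and the basic energy bounds \eqref{eq3.3}, \eqref{eq3.9} give $\|\nabla\Pi\|_{H^{2}}\le C(1+\|\nabla\Delta u\|_{L^{2}}^{2}+\|(\Delta v,\Delta w)\|_{L^{2}}^{2})$, exactly the bound on $\|\Pi\|_{H^{2}}$ used in Lemma \ref{le3.1} with one more derivative. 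Factoring out $(1+\ln(e+\|\nabla\Pi\|_{\dot{B}^{0}_{\infty,\infty}}))^{-2/3}$ and using $H^{2}\hookrightarrow\dot{B}^{0}_{\infty,\infty}$ exactly as in the passage through \eqref{eq3.13}, one obtains
\[
  \frac{d}{dt}\|u\|_{L^{4}}^{4}\le C\Big(1+\|(\nabla v,\nabla w)\|_{L^{2}}^{2}+\frac{\|\nabla\Pi\|_{\dot{B}^{0}_{\infty,\infty}}^{2/3}}{(1+\ln(e+\|\nabla\Pi\|_{\dot{B}^{0}_{\infty,\infty}}))^{2/3}}\Big)\ln\big(e+Y(t)\big)\big(1+\|u\|_{L^{4}}^{4}\big),
\]
with $Y(t)=\sup_{T_{0}\le\tau\le t}(\|\nabla\Delta u(\tau)\|_{L^{2}}^{2}+\|(\Delta v(\tau),\Delta w(\tau))\|_{L^{2}}^{2})$, the precise analogue of \eqref{eq3.13}.

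From here the argument is that of Lemma \ref{le3.1}: by \eqref{eq3.3} and \eqref{eq1.14} there is, for any prescribed small $\sigma>0$, a time $T_{0}<T$ for which the time integral of the coefficient above is $<\sigma$, so Gronwall's inequality on $[T_{0},t]$ yields $\|u(t)\|_{L^{4}}^{4}\le C_{1}(e+Y(t))^{C\sigma}$, the analogue of \eqref{eq3.14}; then the $L^{2}$-estimate for $\nabla u$ and the higher-order estimates (Steps 3 and 4 of Lemma \ref{le3.1}) go through verbatim, since the pressure does not appear there once $\nabla\cdot u=0$ is used, giving \eqref{eq3.28} for $\sigma$ small enough, and a final Gronwall with \eqref{eq3.3} produces \eqref{eq3.1}. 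The one genuinely new step, and the main obstacle, is the estimate of $I_{1}$: one must reach $\|\nabla\Pi\|_{\dot{B}^{0}_{\infty,\infty}}$ without integration by parts, via $L^{4}\hookleftarrow L^{2}\cdot BMO$ (Lemma \ref{le2.2}), absorbing the $L^{2}$ factor into the dissipation through Lemma \ref{le2.6} and controlling the $BMO$ factor logarithmically through Lemma \ref{le2.3}, while tracking the exponents so that precisely the critical power $\frac{2}{3}$ of \eqref{eq1.14} is produced.
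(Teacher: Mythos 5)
Your proposal is correct and follows essentially the same route as the paper: the paper's (3.33) likewise estimates $I_{1}\le\|\nabla\Pi\|_{L^{4}}\|u\|_{L^{4}}^{3}$ without integrating by parts, uses Lemma \ref{le2.2} to write $\|\nabla\Pi\|_{L^{4}}\le C\|\nabla\Pi\|_{L^{2}}^{1/2}\|\nabla\Pi\|_{BMO}^{1/2}$, invokes \eqref{eq3.29} and Lemma \ref{le2.3}, and applies Young's inequality to produce exactly the power $\|\nabla\Pi\|_{BMO}^{2/3}$ before running the same Gronwall bootstrap as in Lemma \ref{le3.1}. Your bookkeeping of the Young exponents and of the harmless lower power $\|\nabla\Pi\|_{BMO}^{4/7}$ from the charge term matches the paper's (more compactly written) computation.
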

\begin{proof}
Under the condition \eqref{eq1.14}, by \eqref{eq2.2} and \eqref{eq3.29},  we can bound $I_{1}$ as follows:
\begin{align}\label{eq3.33}
  I_{1} &=-\int_{\mathbb{R}^{3}}\nabla\Pi\cdot |u|^{2}udx\leq \|\nabla \Pi\|_{L^{4}}\|u\|_{L^{4}}^{3}\leq  C\|\nabla \Pi\|_{L^{2}}^{\frac{1}{2}}\|\nabla \Pi\|_{BMO}^{\frac{1}{2}}\|u\|_{L^{4}}^{3}\nonumber\\
   &  \leq C\|\nabla \Pi\|_{BMO}^{\frac{1}{2}}\big(\||u||\nabla u|\|_{L^{2}}+\|(\nabla v,\nabla w)\|_{L^{2}}^{\frac{1}{2}}\big)^{\frac{1}{2}}\|u\|_{L^{4}}^{3}\nonumber\\
   &  \leq C\|\nabla \Pi\|_{BMO}^{\frac{1}{2}}\big(\||u||\nabla u|\|_{L^{2}}^{\frac{1}{2}}+\|(\nabla v,\nabla w)\|_{L^{2}}^{\frac{1}{4}}\big)\|u\|_{L^{4}}^{3}\nonumber\\
   &\leq \frac{1}{2}\||u||\nabla u|\|_{L^{2}}^{2}+C\big(1+\|(\nabla v,\nabla w)\|_{L^{2}}^{2}+\|\nabla \Pi\|_{BMO}^{\frac{2}{3}}\big)\big(1+\|u\|_{L^{4}}^{4}\big).
\end{align}
Putting  \eqref{eq3.33} and \eqref{eq3.12} together,  and using \eqref{eq2.3},  we see that
\begin{align}\label{eq3.34}
  \frac{d}{dt}\|u\|_{L^{4}}^{4}&\leq C\Big(1+\|(\nabla v,\nabla w)\|_{L^{2}}^{2}+\|\nabla \Pi\|_{BMO}^{\frac{2}{3}}\Big)\Big(1+\|u\|_{L^{4}}^{4}\Big)\nonumber\\
  &\leq   C\Big(1+\|(\nabla v,\nabla w)\|_{L^{2}}^{2}+\|\nabla\Pi\|_{\dot{B}^{0}_{\infty,\infty}}^{\frac{2}{3}}\ln^{\frac{1}{3}}(e+\|\nabla\Pi\|_{H^{2}})\Big)\Big(1+\|u\|_{L^{4}}^{4}\Big)\nonumber\\
  &\leq
  C\Big(1+\|(\nabla v,\nabla w)\|_{L^{2}}^{2}+\frac{\|\nabla\Pi\|_{\dot{B}^{0}_{\infty,\infty}}^{\frac{2}{3}}}{\big(1+\ln(e+\|\nabla \Pi(\cdot,t)\|_{\dot{B}^{0}_{\infty,\infty}})\big)^{\frac{2}{3}}}
  \Big)\nonumber\\
  &\ \ \ \ \ \ \ \times\Big(1+\ln(e+\|\nabla\Pi\|_{H^{2}})\Big)\Big(1+\|u\|_{L^{4}}^{4}\Big)\nonumber\\
  &\leq
  C\Big(1+\|(\nabla v,\nabla w)\|_{L^{2}}^{2}+\frac{\|\nabla\Pi\|_{\dot{B}^{0}_{\infty,\infty}}^{\frac{2}{3}}}{\big(1+\ln(e+\|\nabla \Pi(\cdot,t)\|_{\dot{B}^{0}_{\infty,\infty}})\big)^{\frac{2}{3}}}
  \Big)\nonumber\\
  &\ \ \ \ \ \ \ \times\ln\Big(e+\|\nabla\Delta u\|_{L^{2}}^{2}+\|(\Delta v, \Delta w)\|_{L^{2}}^{2}\Big)\Big(1+\|u\|_{L^{4}}^{4}\Big).
\end{align}
Under the condition \eqref{eq1.14}, we conclude that for any small
constant $\sigma>0$, there exists $T_{0}<T$ such that
\begin{equation*}
  \int_{T_{0}}^{T}\frac{\|\nabla \Pi(\cdot,t)\|_{\dot{B}^{0}_{\infty,\infty}}^{\frac{2}{3}}}{\big(1+\ln(e+\|\nabla \Pi(\cdot,t)\|_{\dot{B}^{0}_{\infty,\infty}})\big)^{\frac{2}{3}}}dt<\sigma.
\end{equation*}
The Gronwall's inequality leads \eqref{eq3.34} to
\begin{align}\label{eq3.35}
   \|u(t)\|_{L^{4}}^{4}&\leq C_{1}\exp\Big(C\int_{T_{0}}^{t}\big(1+\|(\nabla v,\nabla w)\|_{L^{2}}^{2}+\frac{\|\nabla \Pi\|_{\dot{B}^{0}_{\infty,\infty}}^{\frac{2}{3}}}{\big(1+\ln(e+\|\nabla \Pi\|_{\dot{B}^{0}_{\infty,\infty}})\big)^{\frac{2}{3}}}\big)d\tau\nonumber\\
   &\ \ \ \ \ \ \ \ \times \ln(e+Y(t))
   \Big)\nonumber\\
   &\leq C_{1}\exp\big(2C\sigma\ln(e+Y(t))\big)\nonumber\\
   &\leq C_{1}(e+Y(t))^{2C\sigma}.
\end{align}
\vskip.1in
Having obtained \eqref{eq3.35}, similarly, we can follow the remaining steps in Lemma \ref{le3.1} to complete the proof of Lemma \ref{le3.3}.
\end{proof}

\section{Proofs of Theorems \ref{th1.1} and \ref{th1.2}}

We are now in a position to prove Theorems \ref{th1.1} and \ref{th1.2}. Since $u_{0}\in H^{3}(\mathbb{R}^{3})$ with $\nabla\cdot u_{0}=0$, $v_{0}, w_{0}\in L^{1}(\mathbb{R}^{3})\cap H^{2}(\mathbb{R}^{3})$,  it follows from the local existence theorem in \cite{J02} that  there exists a time $T^{*}$ and a unique solution $(\widetilde{u},\widetilde{v},\widetilde{w})$ on $[0,T^{*})$ with initial data $(u_{0},v_{0},w_{0})$ satisfying
\begin{equation*}
\begin{cases}
  \widetilde{u}\in C([0,T^{*}), H^{3}(\mathbb{R}^{3}))\cap L^{\infty}(0,T^{*}; H^{3}(\mathbb{R}^{3}))\cap  L^{2}(0, T^{*};
  H^{4}(\mathbb{R}^{3})),\\
  \widetilde{v},\widetilde{w}\in C([0,T^{*}), H^{2}(\mathbb{R}^{3}))\cap L^{\infty}(0,T^{*}; H^{2}(\mathbb{R}^{3}))\cap  L^{2}(0, T^{*};
  H^{3}(\mathbb{R}^{3})).
\end{cases}
\end{equation*}
Notice that it follows from \cite{S09} that the weak solution coincide with the above local smooth solution
$$
  \widetilde{u}\equiv u, \ \  \widetilde{v}\equiv v, \ \ \widetilde{w} \equiv w\ \ \ \ \text{on}\ \ \ \ [0,T^{*}).
$$
Thus it is sufficient to show that $T=T^{*}$. Suppose that $T^{*}<T$. Without loss of generality, we may assume that $T^{*}$ is the maximal existence time for $(\widetilde{u},\widetilde{v},\widetilde{w})$. Since $\widetilde{u}\equiv u$, $\widetilde{v}\equiv v$ and $\widetilde{w} \equiv w$ on $[0,T^{*})$, by the assumptions \eqref{eq1.12}--\eqref{eq1.14}, we have
\begin{align*}
\int_{0}^{T^{*}}\frac{\|\widetilde{\Pi}(\cdot,t)\|_{\dot{B}^{0}_{\infty,\infty}}}{\sqrt{1+\ln(e+\|\widetilde{\Pi}(\cdot,t)\|_{\dot{B}^{0}_{\infty,\infty}})}}dt<\infty,
\end{align*}
or
\begin{align*}
\int_{0}^{T^{*}}\frac{\|\widetilde{\Pi}(\cdot,t)\|_{\dot{B}^{-1}_{\infty,\infty}}^{2}}{1+\ln(e+\|\widetilde{\Pi}(\cdot,t)\|_{\dot{B}^{-1}_{\infty,\infty}})}dt<\infty,
\end{align*}
or
\begin{align*}
\int_{0}^{T^{*}}\frac{\|\nabla \widetilde{\Pi}(\cdot,t)\|_{\dot{B}^{0}_{\infty,\infty}}^{\frac{2}{3}}}{\big(1+\ln(e+\|\nabla \widetilde{\Pi}(\cdot,t)\|_{\dot{B}^{0}_{\infty,\infty}})\big)^{\frac{2}{3}}}dt<\infty,
\end{align*}
where $\widetilde{\Pi}$ is determined by $(\widetilde{u},\widetilde{v},\widetilde{w})$ through the first equations of \eqref{eq1.1} ($\widetilde{\Psi}$ is determined by the fifth equation of \eqref{eq1.1}).
Therefore, it follows from Lemmas \ref{le3.1}--\ref{le3.3} that we can extend the existence time of $(\widetilde{u},\widetilde{v},\widetilde{w})$ beyond the time $T^{*}$, which contradicts with the maximality of $T^{*}$. This completes the proof of Theorems \ref{th1.1} and \ref{th1.2}.

\vskip .3in
\section*{Acknowledgements}
This paper is partially supported by the National Natural Science Foundation of China (11501453), the Fundamental Research Funds for the Central Universities (2014YB031) and the Fundamental Research Project of Natural Science in Shaanxi Province--Young Talent Project (2015JQ1004).

%\vskip .4in

%\appendix
%\section{}
%\label{s6}

\vskip .4in

\end{document}